\title{The Shark Random Swim \\ (Lévy flight with memory) }
\author{Silvia Businger\footnote{Universität Zürich, silvia.businger@math.uzh.ch}}
\date{}
\newtheorem{theorem}{Theorem}
\newtheorem{definition}{Definition}
\newtheorem{lemma}{Lemma}
\newtheorem{proposition}{Proposition}
\newtheorem*{remark}{Remark}
\def\sgn{\textup{sgn}}
\def\exp{\textup{exp}}
\def\ln{\textup{ln}}
\begin{document}

\maketitle

\begin{abstract}
The Elephant Random Walk (ERW), first introduced by  Schütz and Trimper \cite{und}, is a one-dimensional simple random walk on $ \mathbb{Z} $ having a memory about the whole past. We study the Shark Random Swim, a random walk with memory about the whole past, whose steps are $ \alpha $-stable distributed with  $ \alpha \in (0,2] $. Our aim in this work is to study the impact of the heavy tailed step distributions on the asymptotic behavior of the random walk. We shall see that, as for the ERW, the asymptotic behavior of the Shark Random Swim depends on its memory parameter $ p $, and that a phase transition can be observed at the critical value $ p=\frac{1}{\alpha} $. \\
\textit{keywords}: random walk with memory,  random recursive trees, Yule   processes
% \PACS{PACS code1 \and PACS code2 \and more}
% \subclass{MSC code1 \and MSC code2 \and more}
\end{abstract}

\section{Introduction}

Anomalous diffusion is a natural  phenomena appearing in physics and biology   for example in porous systems \cite{Porous},  the motion of membrans \cite{membran},  or giving the title to our work, in food searching strategies of marine predators such as sharks \cite{shark}. The Elephant Random Walk (ERW), first introduced by Schütz and Trimper \cite{und}, is a simple model that yields an  anomalous diffusion. The ERW is a one-dimensional simple random walk on $ \mathbb{Z} $ having a memory about the whole past. Specifically, fix a parameter  $ q \in [0,1) $. At each 
 time, the elephant remembers one step from the past chosen uniformly at random. With probability $ q $ the elephant repeats it, and with probability $ 1-q $ it makes a step in the opposite direction.  Equivalently, there is a second formulation of the ERW. For $ q \geq \frac{1}{2} $ set $ p :=2q-1 $. At each step, the elephant either, with probability $ p $, chooses one of the past steps uniformly at random and repeats it, or with probability $ (1-p) $,   decides uniformly at random in which direction it goes. Since its introduction the ERW and similar models have been considered in for example Bercu \cite{Bercu}, Bercu and Laulin \cite{Bercu2},  Boyer and Romo-Cruz \cite{Boyer}, Coletti \textit{et.al.} \cite{Coletti}, da Silva \emph{et. al} \cite{non-Gaussian}, Kürsten \cite{Kursten}, Schütz and Trimper \cite{und},  Serva \cite{Serva},  Wang and \cite{Wang}.

The long time behavior of the ERW depends on the memory parameter $ q $, and has been studied in all regimes of $ q $ by Baur and Bertoin  \cite{Baur}, using the connection of the ERW to urn schemes.  Independently, similar results have been obtained by Coletti \textit{et. al.} \cite{Coletti}. Recently, a different approach using martingales has been studied in Bercu \cite{Bercu}. 

In the subcritical case $ p < 1/2 $ and critical case $ p=1/2$ (equivalently $ q < 3/4 $, respectively $ q=3/4 $),  the re-scaled ERW (with a scaling depending on $ p $) converges in distribution in the Skorohod space to a continuous $ \mathbb{R} $-valued Gaussian process (in the case $ p=0$ the limiting process is a standard Brownian motion). In the supercritical case $ p>1/2 $ (equivalently $ q>3/4)  $ it converges almost surely to the process $ (t^{p}\cdot Z, t \geq 0) $, where $ Z $ is a nondegenerate $ \mathbb{R} $-valued random variable. \\ In this work, we aim to study the long time behavior of a random walk in dimension $ d $ with memory and heavy tailed step distribution, that we will refer to as the Shark Random Swim.

For the ERW the two formulations (using the parameters $ q $ or $ p=2q-1 $) are equivalent, but when the steps are not simple, they yield different processes. Specifically, let $ \xi $ be stable distributed and define a random walk in the following way. At step one, the random walk does a step of size $ \xi$ and  at each step $ n \geq 2 $ the random walk remembers one step from the past chosen uniformly at random, repeats it with probability $ q $, and makes a step in the opposite direction with probability $( 1-q )$.  The process defined in this way then behaves like an ERW multiplied with the random variable $ \xi $.  In order to define a process that behaves different from the ERW, one needs to follow the second formulation of the ERW.

Specifically, let $ \xi_i$, for $ i \in \mathbb{N} $ be i.i.d. $ d $-dimensional  standard isotropic strictly stable random variables with zero shift and stability parameter $ \alpha \in (0,2] $, that is \[\mathbb{E}\left[e^{i  \langle \theta , \xi_1 \rangle} \right]=e^{- \Vert \theta \Vert^\alpha}\quad \textnormal{for} \quad \theta \in \mathbb{R}^d ,\] with $ \Vert \theta \Vert $ denoting the Euclidean norm of $ \theta \in \mathbb{R}^ d $. Now imagine a shark is moving around in the ocean. At time one it is located at position $Y_1= \xi_1 $. At each time $ n  $  it does a step $ Y_{n} $, so its position at time $ n $ is  $S_n= \sum_{i=1}^n Y_i$, in the following way.  With probability $ p \in (0,1) $, it chooses uniform at random one of the past steps $ Y_1,...,Y_{n-1} $ and repeats it, and with probability $ (1-p) $ it does a step independent of the past, that is $ Y_{n}=\xi_{n} $. In contrast to the ERW, the step distributions of the Shark Random Swim are heavy-tailed. 
We are interested in the limiting behavior of $ S_n $ as $ n $ tends to infinity. We shall see that there is a phase transition in the asymptotic behavior of the Shark Random Swim.

In  the subcritical case, that is $ \alpha p < 1 $, we shall see that the random variable $ \left(\frac{1}{n}\right)^{\frac{1}{\alpha}} S_{\lfloor tn \rfloor} $ with $ t \in \mathbb{R}_+ $ converges in distribution to $ t^{\frac{1}{\alpha}} S$, where $ S $ is a $ d $-dimensional isotropic $ \alpha $-stable distributed random variable, and that the scale parameter depends on $ p  $ and the stability parameter $ \alpha $.

We will  see that in supercritical case, that is $ \alpha p >1  $,  for each $ t \in \mathbb{R}_+ $, the random variable $ \frac{1}{n^p} S_{\lfloor tn \rfloor} $ converges to $ t^p V $ in probability, where $V$ is an almost surely finite random variable.

In the critical case, that is $ \alpha p = 1 $, the random variable $(n^t\cdot \log(n))^{-\frac{1}{\alpha}} S_{\lfloor n^t \rfloor}  $ with $ t \in \mathbb{R}_+ $ converges in distribution to $ t^{\frac{1}{\alpha}} \cdot \tilde{S} $, where  $ \tilde{S} $ is a $ d $-dimensional isotropic $ \alpha $-stable distributed random variable with zero shift and scale parameter $ ((1-p)\Gamma(\alpha+1))^{\frac{1}{\alpha}} $.

In the subcritical case  the rescaled Shark Random Swim is expected to converge in distribution in the Skorohod space to a  stable process. For the ERW the limiting process in the supercritical case is Gaussian, and hence can be completely characterized by its covariance functions. As there is no analogue  way to characterize stable processes,  it more effortful to characterize the limiting process of the Shark Random Swim.

In dimension one, we shall see that in the subcritical case, the finite dimensional distributions of the Shark random swim converge to the finite dimensional distributions of an $ \alpha $-stable process, which can be characterized in term of stable integrals. In the critical case the finite dimensional distributions converge to the finite dimensional distributions of a $ \alpha $-stable Lévy process.

The ERW  has a connection to Bernoulli bond percolation on random recursive trees that has first been observed by Kürsten \cite{Kursten}, which still holds true for the Shark Random Swim. Consider a random recursive tree of size $ n $, on which we perform Bernoulli bond percolation. We call the connected components after deleting the edges clusters. We then denote by  $ c_{i,n} $,  $ i \in \mathbb{N} $,  the  cluster rooted at $ i $ and by $ \vert c_{i,n} \vert $ its size. We  shall see that  the position of the Shark Random Swim at time $ n $ can be expressed as $ S_n=\sum_{i} \vert c_{i,n} \vert \xi_i $. The phase transition of the Shark Random Swim is then determined by the asymptotic behavior of the cluster sizes as $ n $ tends to infinity.

In the second section, we shall give a precise description of the connection between the Shark Random Swim and Bernoulli bond percolation, and summarize some results on the latter.  In the first part of the third section, we shall  study the limiting behavior of the Shark Random Swim in the supercritical case.  Our argument will rely on limit results of fragmentation processes of  infinite recursive trees by Baur and Bertoin  \cite{Ornstein}. In the second part of the third section, we shall use fundamental results on Yule processes to study the  limiting behavior of the Shark Random Swim in the subcritical case, and in the third part of the third section  the critical case. Finally, the last part is is devoted to the convergence of the finite dimensional distributions and the characterization of the limiting process.

\label{The shark random swim}

\section{Connection to random recursive trees}
As mentioned in the introduction, it will be crucial for our analysis  to express the position of the Shark Random Swim at time $ n $ in terms of cluster sizes of Bernoulli bond percolation on random recursive trees. 
Recall that random recursive trees are rooted trees with increasing labels along branches that can be build in a recursive manner. We denote by $ T_1 $ the tree with a single node with label $ 1 $. If $ T_{n-1} $ denotes the tree of size $ n-1 $, then  $ T_n $ is build by choosing uniformly at random one of the nodes of $ T_{n-1} $ and adding the $ n $-th node (the node with label $ n $).  We then let  $ 0<p<1 $ and perform Bernoulli bond percolation on the tree, that is each edge is deleted with probability $ (1-p) $, independently of the other edges.

We now consider each step of the shark as adding a node in the random recursive tree. The starting position corresponds to the root, the first step to the node with label $ 1 $, and so on. Following Kürsten \cite{Kursten}, we further add a spin to every node.  The starting position of the random swim corresponds to the root, and we thus assign the spin $ \xi_1 $ to the root.  We then built the tree recursively as described above. For building $ T_n $, we pick one of the nodes of $ T_{n-1} $ and connect the $ n $-th node to the chosen node. With probability $ (1-p) $ the edge connecting the new node to the existing node is deleted, and we assign the spin $ \xi_n $ to the new node. With probability $ p $ the edge is kept, and the new node adopts the spin from the node it is attached to (see Figure 14).
We then get a forest, and we call the trees of the forest clusters.

\begin{figure}[ht]
\label{cluster2}
\begin{center}\parbox{4cm}{\includegraphics[width=3.5cm]{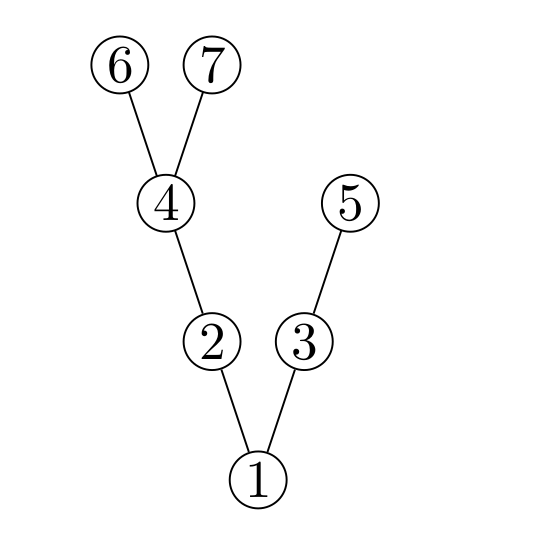}}
\parbox{5cm}{\includegraphics[width=4.5cm]{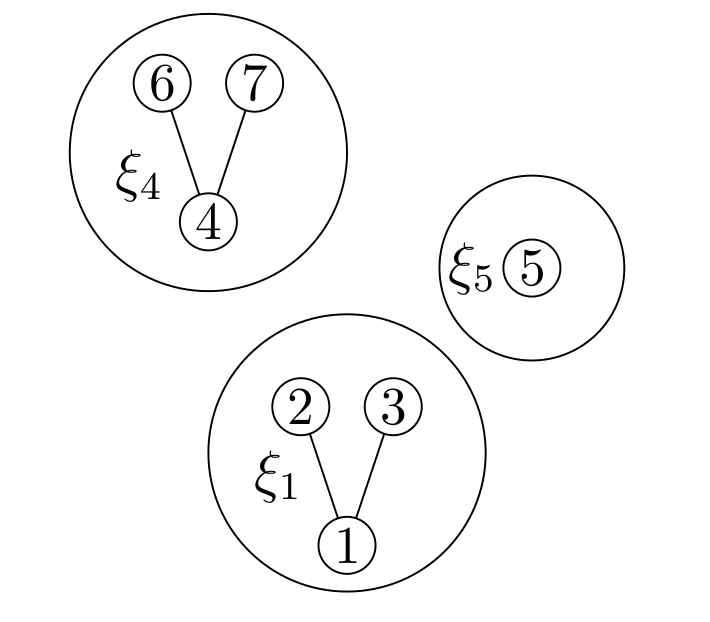}}
\end{center}
\centerline{\bf Figure 14}
\centering{\sl Clusters of the random recursive tree along with the corresponding spin.} 
 \end{figure}  Note that each cluster carries the same spin and different cluster carry different spins. We will denote by $ c_{i,n} $ the cluster rooted at $ i $  and  by $ \vert c_{i,n} \vert $ its size. By convention we have $ c_{i,n}=\emptyset $ and $ \vert c_{i,n} \vert =0 $ if there is no cluster rooted at $ i $. The position of the random swim at time $ n $ can then be rewritten as
\[S_n= \sum_{i} \vert c_{i,n} \vert \xi_i.\]
Note moreover that  we  have $ \sum_i \vert c_{i,n} \vert =n $. 

\subsection{Results on the clusters of Bernoulli bond percolation}

For the limit behavior of the position of the Shark Random Swim at time $ n $, we shall need  to control the asymptotic behavior of the cluster sizes as $ n $ tends to infinity. In this section, we  summarize some results on the cluster sizes of Bernoulli bond percolation. First we mention a connection between the cluster sizes and a Pólya urn scheme. Imagine that we have an urn containing initially $ 1 $ black ball and $ m $ white balls. At each time step we choose a ball uniformly at random from the urn, and return it along with a ball of the same color. Let $ Y_n $ denote the number of black balls after $ n $-draws. It is known:
\begin{lemma}[Pólya]
\label{Polya} Let $ n $ and $ m $ be positive integers. 
\begin{enumerate} 
 \item  Then $ Y_n $ is Beta-binomial distributed with parameters $( n $, $ 1 $, $ m )$, that is to say
 \[\mathbb{P}(Y_n=i) =\left(\begin{matrix}
n \\ i 
\end{matrix} \right) \frac{B(i+1, n-i+m)}{B(1,m)}, \quad i\in\lbrace 1,...,n \rbrace, \]
where $ B $ denotes the Beta function.
 \item The sequence $ \frac{1}{n} Y_n $ converges almost surely  as $ n $ tends to infinity to a random variable $ Y $ that is $ \textup{Beta}(1,m) $ distributed.
 \end{enumerate}
 \end{lemma}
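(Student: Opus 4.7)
For part (1), the strategy is to exploit the classical exchangeability of the sequence of colors drawn from a Pólya urn. If one fixes a pattern with $i$ black draws at positions $1\leq j_1<\cdots<j_i\leq n$ and $n-i$ white draws at the remaining positions $\ell_1<\cdots<\ell_{n-i}$, then immediately before the $j_k$-th step the urn contains $k$ black balls out of $m+j_k$ in total, and before the $\ell_r$-th step it contains $m+r-1$ white out of $m+\ell_r$ in total. Multiplying the corresponding conditional probabilities yields
\[
\prod_{k=1}^{i}\frac{k}{m+j_k}\cdot\prod_{r=1}^{n-i}\frac{m+r-1}{m+\ell_r} \;=\; \frac{i!\,m(m+1)\cdots(m+n-i-1)}{(m+1)(m+2)\cdots(m+n)},
\]
because $\{m+j_k\}\cup\{m+\ell_r\}=\{m+1,\ldots,m+n\}$. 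Crucially this is independent of the pattern, which is exchangeability. Summing over the $\binom{n}{i}$ admissible patterns and rewriting factorials through $B(a,b)=\Gamma(a)\Gamma(b)/\Gamma(a+b)$ recovers the claimed Beta-binomial density; the rest is factorial bookkeeping.

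For part (2), I would argue by bounded martingale convergence. Setting $M_n := Y_n/(m+n+1)$, the proportion of black balls, a short computation gives $\mathbb{E}[Y_{n+1}\mid\mathcal{F}_n] = Y_n\,(m+n+2)/(m+n+1)$, so $(M_n)$ is a $[0,1]$-valued martingale for the natural filtration. The martingale convergence theorem then yields $M_n\to Y$ almost surely (and in $L^p$ for every $p\geq 1$), and since $(m+n+1)/n\to 1$ the same limit holds for $Y_n/n$. To identify the law of $Y$, the plan is to compute the $k$-th moment of $Y_n/n$ directly from part (1): using the Beta-binomial expression and Stirling's formula,
\[
\mathbb{E}\bigl[(Y_n/n)^k\bigr] \;\xrightarrow[n\to\infty]{}\; \frac{\Gamma(k+1)\Gamma(m+1)}{\Gamma(m+k+1)},
\]
which are precisely the moments of a $\textup{Beta}(1,m)$ distribution. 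Since this distribution is compactly supported it is determined by its moments, forcing $Y\sim\textup{Beta}(1,m)$. Alternatively, de Finetti's theorem applied to the exchangeable sequence of colors identifies $Y$ as the directing Bernoulli parameter whose marginal must then coincide with the limit read off from (1).

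The only genuinely non-routine step is the identification of the limit law in part (2); the martingale convergence supplies the almost-sure existence of $Y$ essentially for free, and what remains is either the moment comparison with $\textup{Beta}(1,m)$ or the appeal to de Finetti. Both routes are standard, so no substantial obstacle is anticipated.
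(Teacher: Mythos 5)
Your proposal is correct. Note that the paper does not prove this lemma at all --- it simply cites Mahmoud's book on P\'olya urns (Theorems 3.1 and 3.2 there) --- so there is no in-paper argument to compare against; what you give is the standard self-contained derivation. The exchangeability computation in part (1) is right: the product of conditional probabilities along any fixed colour pattern telescopes to $i!\,m(m+1)\cdots(m+n-i-1)/\big((m+1)\cdots(m+n)\big)$, which matches $B(i+1,n-i+m)/B(1,m)$ after rewriting with Gamma functions, and multiplying by $\binom{n}{i}$ gives the Beta-binomial law. In part (2), $M_n=Y_n/(m+n+1)$ is indeed a bounded martingale, so almost sure convergence is immediate, and the identification of the limit via convergence of moments (a compactly supported law is moment-determined) or via de Finetti is standard and sound. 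One small point of bookkeeping, inherited from the paper's own loose statement: your part (1) counts black \emph{draws} (values $0,\dots,n$) while your martingale in part (2) uses the number of black \emph{balls} (which is the draw count plus one); the two differ by $1$ and so have the same $n^{-1}$-rescaled limit, but it is worth saying explicitly which convention $Y_n$ follows so that the range $i\in\{1,\dots,n\}$ in the displayed density is consistent.
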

See for example Mahmoud \cite{Mahamoud}, Theorem 3.1 and Theorem 3.2. 
\begin{lemma}
\label{freedman}
Conditioned on $ Y = x $ the random variables $ Y_n $ are  Binomial distributed random variables with parameter $ (n,x) $. 
\end{lemma}
For a proof of this result, see for example Freedman \cite{freedman}.

Now let $ c_{k,n}^\prime $ denote the subtree rooted at node $ k $ after percolation, that is all the nodes and edges that are still connected to the node $ k $, via an increasing sequence of labels, after we performed percolation. The size of this subtree is denoted by $ \vert c_{k,n}^ {\prime} \vert  $, and can be expressed with help of the above urn sheme. We first shall compute how many nodes are in the subtree rooted at node $ k $, \textit{before} the edges are deleted.  Let black balls correspond to nodes that are in the subtree rooted at node $ k $, and white balls correspond to nodes that are not in the subtree rooted at node $ k $. When we build the tree of size $ n $ recursively, once we have arrived at the node $ k $, we have distributed $ k $ nodes and are left to distribute $ n-k $ nodes. In the urn setting this corresponds starting with  $ k $ balls, one black ball (corresponding to the node $ k $) and $ k-1 $ white balls. The number of the remaining $ n-k $ nodes, that will be added to the subtree rooted at node  $ k $,  then corresponds to the number of black balls $ Y(n,k) $  after $ n-k $ draws and thus:
 \begin{equation}
 \label{subtree rooted}
  \vert c_{k,n}^{\prime} \vert \overset{d}{=} \vert c_{1,Y(n,k)} \vert, 
  \end{equation}
where  $Y(n,k) $ is Beta-binomial distributed with parameter $ (n-k,1,k-1) $ and $Y(n,k) =0$ if $ k \geq n $. Now if the edge that connects $ k $ to its parent has been deleted, $ c_{k,n}^\prime $ is a cluster of the Bernoulli bond percolation. Remember that the probability of this event is $ (1-p) $, and that it is independent of the size of the cluster. Thus for all $ i \geq 2 $ we have
\[\mathbb{E}[\vert c_{i,n} \vert]=(1-p) \mathbb{E}[\vert c_{i,n}^\prime \vert].\] 
We will moreover need to control the limiting behavior of the cluster sizes.
 For the root cluster it has been shown (see Kürsten \cite{Kursten}, Section IV):
\begin{lemma} \label{Kursten} We have:
 \[ \mathbb{E} \left[ \vert c_{1,n} \vert \right]= \frac{\Gamma(n+p)}{\Gamma(p+1) \Gamma(n)} \]
and 
\[\mathbb{E} \left[ \vert c_{1,n} \vert^2 \right]= \frac{1}{p}\left(\frac{\Gamma(2p+n)}{\Gamma(2p)\Gamma(n)} - \frac{\Gamma(p+n)}{\Gamma(p)\Gamma(n)} \right).\]
\end{lemma}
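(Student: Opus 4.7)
The plan is to derive each formula by setting up a one-step recursion that uses the conditional law of $|c_{1,n}|$ given the tree at time $n-1$, and then to solve the resulting linear recurrence.

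For the first moment, I would condition on $T_{n-1}$ and use the recursive construction: the new node $n$ is attached to a uniformly chosen node of $T_{n-1}$, and ends up in the root cluster iff that node lies in $c_{1,n-1}$ and the connecting edge survives percolation. Hence
\begin{equation*}
\mathbb{E}\bigl[|c_{1,n}| \,\big|\, T_{n-1}\bigr] \;=\; |c_{1,n-1}| + p\cdot\frac{|c_{1,n-1}|}{n-1} \;=\; \frac{n-1+p}{n-1}\,|c_{1,n-1}|.
\end{equation*}
Taking expectations yields a multiplicative recursion, and telescoping from $\mathbb{E}[|c_{1,1}|]=1$ gives
\begin{equation*}
\mathbb{E}\bigl[|c_{1,n}|\bigr] \;=\; \prod_{k=1}^{n-1}\frac{k+p}{k} \;=\; \frac{\Gamma(n+p)}{\Gamma(p+1)\Gamma(n)},
\end{equation*}
which is exactly the first claim.

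For the second moment I would use the same conditioning. Given $T_{n-1}$, the cluster size either grows by one (with probability $p|c_{1,n-1}|/(n-1)$) or stays the same. A short computation gives
\begin{equation*}
\mathbb{E}\bigl[|c_{1,n}|^2 \,\big|\, T_{n-1}\bigr] \;=\; \frac{n-1+2p}{n-1}\,|c_{1,n-1}|^2 + \frac{p}{n-1}\,|c_{1,n-1}|,
\end{equation*}
so, writing $a_n := \mathbb{E}[|c_{1,n}|^2]$ and substituting the first-moment formula just obtained,
\begin{equation*}
a_n \;=\; \frac{n-1+2p}{n-1}\,a_{n-1} + \frac{p}{n-1}\cdot\frac{\Gamma(n-1+p)}{\Gamma(p+1)\Gamma(n-1)}.
\end{equation*}

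The main piece of work is solving this first-order linear recurrence. The homogeneous solutions are proportional to $A_n := \Gamma(n+2p)/(\Gamma(2p)\Gamma(n))$, because the ratio $A_n/A_{n-1}$ equals precisely $(n-1+2p)/(n-1)$. For a particular solution I would look for one of the form $c\,B_n$ with $B_n := \Gamma(n+p)/(\Gamma(p)\Gamma(n))$, noting that $B_n/B_{n-1}=(n-1+p)/(n-1)$ so that $B_n - \frac{n-1+2p}{n-1}B_{n-1} = -\frac{p}{n-1}B_{n-1}$. Combining the two ingredients, the candidate
\begin{equation*}
a_n \;=\; \frac{1}{p}\bigl(A_n - B_n\bigr) \;=\; \frac{1}{p}\left(\frac{\Gamma(n+2p)}{\Gamma(2p)\Gamma(n)} - \frac{\Gamma(n+p)}{\Gamma(p)\Gamma(n)}\right)
\end{equation*}
satisfies the recurrence, and one checks it agrees at $n=1$ with $a_1=1$ using $\Gamma(1+2p)/\Gamma(2p)=2p$ and $\Gamma(1+p)/\Gamma(p)=p$. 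Uniqueness of the solution to the linear recurrence with the given initial condition then gives the second claim.

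The only real obstacle is algebraic: guessing the right ansatz for the particular solution and keeping track of the Gamma-function shift identities cleanly. Once the ansatz $c(A_n-B_n)$ is identified, verification is mechanical and can either be done by direct substitution as above or, equivalently, by induction on $n$ starting from $a_1=1$.
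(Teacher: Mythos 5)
Your proof is correct. Note that the paper itself gives no proof of this lemma at all: it simply cites K\"ursten, Section IV, where the same moment formulas are derived by essentially the same device (a one-step master equation for the root-cluster size under the recursive attachment rule, combined with the fact that edge retention is independent of the attachment choice). Your derivation is therefore a complete, self-contained substitute for the citation. Both conditional-expectation identities are right — in particular the Bernoulli increment with conditional success probability $p\,|c_{1,n-1}|/(n-1)$, using $B^2=B$, gives exactly the stated recursion for the second moment — and your ansatz $\frac{1}{p}(A_n-B_n)$ does satisfy it: the inhomogeneous term it produces is $\frac{1}{n-1}B_{n-1}=\frac{p}{n-1}\,\mathbb{E}[|c_{1,n-1}|]$ because $\Gamma(1+p)=p\,\Gamma(p)$, and the initial value $a_1=1$ matches $|c_{1,1}|=1$. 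The only thing worth making explicit if you write this up is that the uniform choice of parent and the Bernoulli edge deletion are both independent of $\mathcal{F}_{n-1}$, which is what licenses the conditional computations.
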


 The following three results can be found in Baur and Bertoin  \cite{Ornstein}, Theorem 3.1, Lemma 3.3, and Theorem 3.4. See also Möhle \cite{Mohle}.
\begin{lemma} \label{Ornstein1} The following limit
\[\lim_{n \rightarrow \infty} \frac{1}{n^p}\vert c_{1,n} \vert = X_1 \]
 exists in $ (0, \infty) $ almost surely. Moreover $ X_1 $ is Mittag-Leffler distributed with parameter $ p $. 
\end{lemma}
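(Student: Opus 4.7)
My plan is to extract from the recursive growth of $|c_{1,n}|$ a non-negative martingale, use Lemma~\ref{Kursten} to bound its second moment, and then identify the a.s.\ limit by matching all moments with those of the Mittag-Leffler distribution.

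First I would set up the martingale. Let $\mathcal{F}_n$ denote the $\sigma$-algebra generated by the tree and the percolation up to step $n$. When node $n$ is attached to $T_{n-1}$ it selects a parent uniformly among the $n-1$ existing vertices and the new edge is kept with probability $p$, independently of everything else; hence
\[ \mathbb{P}\bigl(|c_{1,n}| = |c_{1,n-1}| + 1 \,\big|\, \mathcal{F}_{n-1}\bigr) = \frac{p\,|c_{1,n-1}|}{n-1}, \]
so that $\mathbb{E}[|c_{1,n}|\mid\mathcal{F}_{n-1}] = |c_{1,n-1}|(n-1+p)/(n-1)$. A direct check then shows that
\[ M_n := \frac{\Gamma(n)}{\Gamma(n+p)}\,|c_{1,n}| \]
defines a non-negative $(\mathcal{F}_n)$-martingale with constant mean $\mathbb{E}[M_n] = 1/\Gamma(1+p)$.

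Next I would establish almost sure convergence. Lemma~\ref{Kursten} gives $\mathbb{E}[|c_{1,n}|^{2}] \sim n^{2p}/(p\,\Gamma(2p))$, and since $\Gamma(n+p)/\Gamma(n) \sim n^{p}$ by Stirling, $\sup_{n} \mathbb{E}[M_n^{2}] < \infty$. The martingale $(M_n)$ is therefore $L^2$-bounded, hence converges a.s.\ and in $L^2$ to a non-negative limit $M_\infty$ with $\mathbb{E}[M_\infty] = 1/\Gamma(1+p) > 0$; combined with $\Gamma(n+p)/\Gamma(n) \sim n^{p}$ this yields
\[ n^{-p}\,|c_{1,n}| \;\xrightarrow[n\to\infty]{\mathrm{a.s.}}\; X_1 := M_\infty, \]
which is finite a.s.\ and has strictly positive mean.

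Finally I would pin down the law of $X_1$ by moments. Expanding $(|c_{1,n-1}|+1)^k = |c_{1,n-1}|^k + k\,|c_{1,n-1}|^{k-1} + O(|c_{1,n-1}|^{k-2})$ and inserting into the growth rule produces, conditionally on $\mathcal{F}_{n-1}$,
\[ \mathbb{E}[|c_{1,n}|^{k} \mid \mathcal{F}_{n-1}] = |c_{1,n-1}|^{k}\Bigl(1 + \tfrac{pk}{n-1}\Bigr) + O\bigl(|c_{1,n-1}|^{k-1}/(n-1)\bigr). \]
Induction on $k$ gives both $\mathbb{E}[|c_{1,n}|^{k}] \sim (k!/\Gamma(1+pk))\,n^{pk}$ and $L^k$-boundedness of $(M_n)$, so passing to the limit I obtain $\mathbb{E}[X_1^{k}] = k!/\Gamma(1+pk)$ for every $k \geq 1$. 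These are the moments of the Mittag-Leffler distribution of parameter $p$, and Stirling shows that they satisfy Carleman's criterion, so they determine the law uniquely. Thus $X_1$ is Mittag-Leffler$(p)$, and absolute continuity of that law on $(0,\infty)$ gives $X_1 > 0$ a.s.

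The main obstacle is the last step: closing the moment recursion at the exact constants $k!/\Gamma(1+pk)$, and carrying out the parallel induction that propagates $L^k$-boundedness of $(M_n)$ so that moment equality transfers to the limit, both demand careful bookkeeping. The first two steps are by contrast almost immediate once Lemma~\ref{Kursten} is on the table.
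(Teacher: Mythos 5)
Your proof is correct, but it takes a genuinely different route from the paper's. The paper embeds the growth of the root cluster into continuous time: it reads the recursive tree as the genealogy of a standard Yule process $Y$, notes that percolation thins it to a Yule process $Y^{(p)}$ of rate $p$ with $\vert c_{1,n}\vert \overset{d}{=} Y^{(p)}(T_n)$, and gets the limit by combining the two classical martingale limits $e^{-t}Y(t)\to W$ and $e^{-pt}Y^{(p)}(t)\to W^{(p)}$, the Mittag-Leffler law being inherited from these terminal values. You instead stay in discrete time: you extract the P\'olya-urn martingale $M_n=\Gamma(n)\vert c_{1,n}\vert/\Gamma(n+p)$ directly from the transition rule $\mathbb{P}(\vert c_{1,n}\vert=\vert c_{1,n-1}\vert+1\mid\mathcal{F}_{n-1})=p\vert c_{1,n-1}\vert/(n-1)$, obtain a.s.\ convergence from $L^2$-boundedness via Lemma~\ref{Kursten}, and identify the law by the method of moments together with Carleman's criterion. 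This is essentially the approach of M\"ohle, which the paper itself cites as an alternative reference; it is self-contained (no continuous-time embedding, and no appeal to the joint behaviour of $W$ and $W^{(p)}$, which the paper's own proof in fact leaves implicit when it ``combines'' the two limits). The price is the step you yourself flag: in the recursion $m_k(n)=m_k(n-1)\bigl(1+\tfrac{pk}{n-1}\bigr)+\tfrac{p}{n-1}\sum_{j\le k-2}\binom{k}{j}m_{j+1}(n-1)$ the limit of $m_k(n)\Gamma(n)\Gamma(1+pk)/\Gamma(n+pk)$ is $1$ plus an infinite sum of Gamma-function ratios, and one must verify it telescopes to exactly $k!$; for $k=2$, for instance, $\sum_{m\ge1}\Gamma(m+p)/\Gamma(m+1+2p)=\Gamma(1+p)/(p\,\Gamma(1+2p))$ yields precisely $\mathbb{E}[X_1^2]=2/\Gamma(1+2p)$, consistent with Lemma~\ref{Kursten}. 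That computation does close at the Mittag-Leffler moments for every $k$, but it is the one piece of your argument you have sketched rather than carried out.
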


\begin{proof}
Let $ Y=(Y(t))_{t \in \mathbb{R}_+} $ denote the  Yule process started from $ Y_0=1 $, such that  $ Y(t) $ describes the number of individuals alive at time $ t $, when each individual lives forever and gives birth to children at  rate $ 1 $. It is well know that  $ e^{-t} Y(t)$ is a martingale, and that its terminal value $ W $ exists a.s. and is $\exp( 1 )$ distributed. Thus if we define the birth time of the $ n $-th child $T_n:= \inf \lbrace t: Y(t) = n \rbrace$, we have
\begin{equation}
\label{tuttituuuuuu}
\lim_{n \rightarrow \infty} e^{-T_n} n = W \quad a.s. 
\end{equation}
Now imagine we kill each child with probability $ (1-p) $, independently of the other children. The process of the number of individuals alive at time $ t $, denoted by $ Y^{(p)}(t) $,  is then again a Yule process, with birth rate $ p $, and $ \vert c_{1,n} \vert \overset{d}{=} Y^{p}(T_n) $. Moreover
\begin{equation}
\label{tuttituuuuu2}
\lim_{n \rightarrow \infty} e^{-tp}Y^{(p)}(t)=W^{(p)} \quad a.s,
\end{equation}
where $ W^{(p)} $ is standard exponential distributed. By combining the Equations (\ref{tuttituuuuuu}) and (\ref{tuttituuuuu2}), we arrive at
\begin{equation} \label{nie} \lim_{n \rightarrow \infty} \frac{1}{n^p} Y^{(p)}(T_n)=\frac{W^{(p)}}{W^p} \quad a.s
\end{equation}
Since the left hand side of Equation (\ref{nie}) is independent of the Yule process $ Y $, and hence also of $ W $, we conclude that the limit follows the Mittag-Leffler distribution, by computing its moments.

\end{proof} 

\begin{lemma} \label{Ornstein}
Let $ X_1 $ be defined as in Lemma \ref{Mittag leffler2}. For each $ k \geq 2 $, we have the almost sure convergence 
\[\lim_{n \rightarrow \infty} \frac{1}{n^{ p} } \vert c_{k,n}^{\prime} \vert =  \rho_k \quad a.s, \]
further $ \rho_k $ is equal in distribution to $ X_1 \cdot \beta_k^p $, where $ \beta_k $ is a $ \textup{Beta}(1,k-1) $ distributed random variable independent of $ X_1 $.
\end{lemma}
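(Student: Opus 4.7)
The plan is to reduce the claim to Lemma~\ref{Ornstein1} and Lemma~\ref{Polya} via a coupling that exposes the subtree rooted at $k$ as an independent random recursive tree carrying its own Bernoulli bond percolation. I build $T_n$ step by step: at each step $m\geq k+1$, a uniform node in $T_{m-1}$ is chosen and $m$ is attached to it, with the connecting edge kept with probability $p$ independently. I would decompose this uniform choice into two independent randomizations: (a) an \emph{in/out} coin deciding whether the chosen node lies in the current subtree rooted at $k$, and (b) a uniform pick among the nodes on the chosen side.

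Under this decomposition, the in/out flips form exactly the Pólya urn with initial composition $(1,k-1)$ described before (\ref{subtree rooted}), so Lemma~\ref{Polya} yields $Y(n,k)/n\to\beta_k$ almost surely with $\beta_k\sim\textup{Beta}(1,k-1)$, and in particular $Y(n,k)\to\infty$ a.s. Meanwhile, the sequence of subtrees rooted at $k$, indexed by its own size $m$, is a random recursive tree $\widetilde T^{(k)}$ built from the within-side choices (b), and the percolation restricted to the edges of $\widetilde T^{(k)}$ is Bernoulli$(p)$. The key point is that $\widetilde T^{(k)}$ together with its percolation is independent of the urn, since the ingredients (a) and (b) are independent at each step and the percolation decisions on newly added edges are independent of everything else. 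Applying Lemma~\ref{Ornstein1} to $\widetilde T^{(k)}$ yields $m^{-p}|c_{1,m}^{(k)}|\to X_1^{(k)}$ a.s., with $X_1^{(k)}$ Mittag-Leffler$(p)$ distributed and independent of $\beta_k$; here $|c_{1,m}^{(k)}|$ denotes the size of the root cluster of $\widetilde T^{(k)}_m$ after percolation.

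By the coupling, $|c_{k,n}^{\prime}|=|c_{1,Y(n,k)}^{(k)}|$, so
\[
\frac{|c_{k,n}^{\prime}|}{n^p}\;=\;\frac{|c_{1,Y(n,k)}^{(k)}|}{Y(n,k)^{p}}\left(\frac{Y(n,k)}{n}\right)^{p}\;\xrightarrow{a.s.}\;X_1^{(k)}\cdot\beta_k^{p},
\]
and setting $\rho_k:=X_1^{(k)}\cdot\beta_k^p$ together with $X_1^{(k)}\overset{d}{=}X_1$ and the independence above gives the claimed distributional identity. The main obstacle is the rigorous justification of the coupling: one must verify that splitting the uniform node choice into an urn flip and a within-side uniform pick factors the joint law step by step, so that the internal growth of the subtree and the percolation within it are genuinely independent of the urn tracking its size. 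Once this factorization is established, the conclusion follows by combining the two limit lemmas.
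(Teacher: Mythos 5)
Your proposal is correct and follows essentially the same route as the paper, whose proof simply invokes the identity $\vert c_{k,n}^{\prime} \vert \overset{d}{=} \vert c_{1,Y(n,k)} \vert$ from (\ref{subtree rooted}) together with Lemma \ref{Polya} (and, implicitly, Lemma \ref{Ornstein1}). Your version is in fact more careful: you make explicit the process-level coupling (urn flips versus within-subtree uniform picks) that is needed to upgrade the fixed-$n$ distributional identity to an almost sure limit and to justify the independence of $X_1^{(k)}$ and $\beta_k$, a point the paper leaves implicit.
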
 
\begin{proof}
The result follows from  Lemma \ref{Polya} and the relation $ \vert c_{k,n}^{\prime} \vert \overset{d}{=} \vert c_{1,Y(n,k)} \vert $, where  $Y(n,k) $ is Beta-binomial distributed with parameters $ (n-k,1,k-1) $. 
\end{proof}
Lemma \ref{Ornstein} entails the following statement:
\begin{lemma} \label{Mittag leffler2} For each $ i \geq 2 $ the following limit
\[\lim_{n \rightarrow \infty} \frac{1}{n^p}\vert c_{i,n} \vert = X_i \]
 exists in $ (0, \infty) $ almost surely, and its moments are give by
 \[\mathbb{E}[X_i^q]= (1-p)\frac{\Gamma(q+1)\Gamma(i)}{\Gamma(pq+i)}, \quad q \geq 0.\]  Finally if $ \alpha p >1 $,  the series $\sum_{i=1}^\infty \mathbb{E}[X_i^\alpha] $ converges.
\end{lemma}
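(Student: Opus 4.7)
The plan is to deduce the Lemma from Lemma \ref{Ornstein} by separating the cluster rooted at $i$ from the (pre-percolation) subtree rooted at $i$. First, I would write $|c_{i,n}| = B_i \cdot |c_{i,n}'|$, where $B_i$ is the indicator that the edge joining $i$ to its parent has been removed, so $B_i$ is Bernoulli$(1-p)$ and independent of everything happening in the subtree rooted at $i$. Applying Lemma \ref{Ornstein} then gives
\[\lim_{n\to\infty}\frac{1}{n^p}|c_{i,n}| = B_i\,\rho_i =: X_i \quad \text{a.s.},\]
which yields the first assertion (with the understanding that $X_i=0$ on the event $\{B_i=0\}$).

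For the moment formula, I would use $B_i^q = B_i$ for $q > 0$ together with the representation $\rho_i \stackrel{d}{=} X_1\,\beta_i^p$ with $X_1$ and $\beta_i$ independent, to obtain
\[\mathbb{E}[X_i^q] = (1-p)\,\mathbb{E}[X_1^q]\,\mathbb{E}[\beta_i^{pq}].\]
Then I plug in the Mittag--Leffler moment $\mathbb{E}[X_1^q] = \Gamma(q+1)/\Gamma(pq+1)$ (from Lemma \ref{Ornstein1}) and the standard Beta$(1,i-1)$ moment $\mathbb{E}[\beta_i^{pq}] = \Gamma(pq+1)\Gamma(i)/\Gamma(pq+i)$ obtained from the Beta integral; multiplying these three pieces produces exactly the stated formula.

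For the series in the case $\alpha p > 1$, I would apply Stirling's asymptotic $\Gamma(i)/\Gamma(\alpha p + i) \sim i^{-\alpha p}$ as $i \to \infty$ to conclude $\mathbb{E}[X_i^\alpha] = O(i^{-\alpha p})$. Since $\mathbb{E}[X_1^\alpha]=\Gamma(\alpha+1)/\Gamma(\alpha p+1)$ is finite, summability of $\sum_{i\geq 1} \mathbb{E}[X_i^\alpha]$ then reduces to the $p$-series criterion and holds exactly when $\alpha p > 1$.

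The main technical subtlety lies in the first step: I need $B_i$ to be independent of the entire random variable $\rho_i$, not merely of each $|c_{i,n}'|$ individually. This is clear from the percolation construction (the status of the edge from $i$ to its parent is sampled independently of the subtree above it and of the percolation inside that subtree), but worth recording carefully, because the representation $\rho_i \stackrel{d}{=} X_1 \beta_i^p$ in Lemma \ref{Ornstein} is given only up to equality in distribution; consequently the factorization $\mathbb{E}[X_1^q]\mathbb{E}[\beta_i^{pq}]$ should be invoked at the level of the marginals of $X_1$ and $\beta_i$, which is all that is needed for the moment computation.
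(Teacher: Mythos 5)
Your proof is correct and is precisely the argument the paper intends: the paper gives no written proof beyond the remark that the lemma is ``entailed by Lemma \ref{Ornstein}'', and your decomposition $\vert c_{i,n}\vert = B_i\,\vert c_{i,n}'\vert$ with $B_i$ Bernoulli$(1-p)$ independent of the subtree, combined with the Mittag--Leffler and $\textup{Beta}(1,i-1)$ moment formulas, is exactly the intended way to fill that in. Your side remark that $X_i=0$ with probability $p$ (so the limit really lies in $[0,\infty)$, consistent with the formula giving $\mathbb{E}[X_i^0]=1-p$) correctly flags a slight imprecision in the lemma's wording rather than a flaw in your argument.
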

Last, we deduce from  Lemma \ref{Kursten} and the asymptotic $  \Gamma(2p+n) \sim \Gamma(n)n^{2p} $ as $ n $ tends to infinity that \begin{eqnarray*} \lim_{n \rightarrow \infty} \frac{1}{n^{2 p} } \mathbb{E} \left[ \vert c_{1,n} \vert^2 \right] & = & \lim_{n \rightarrow \infty}  \frac{1}{n^{2 p} } \cdot \frac{1}{p}\left(\frac{\Gamma(2p+n)}{\Gamma(2p)\Gamma(n)} - \frac{\Gamma(p+n)}{\Gamma(p)\Gamma(n)} \right) \\ &=& \frac{2}{\Gamma(2p+1)}= \mathbb{E}[X_1^2], \end{eqnarray*} since $ X_1 $ is Mittag-Leffler distributed with parameter $ p $, and thus by Scheffe's Lemma 
\[\lim_{n \rightarrow \infty} \frac{1}{n^{p}} \vert c_{1,n} \vert = X_1\]
in $L^2$. A fortiori
\begin{equation} \label{limit expectation2} \lim_{n \rightarrow \infty} \frac{1}{n^{\alpha p} } \mathbb{E} \left[ \vert c_{1,n} \vert^\alpha \right]= \mathbb{E}[X_1^\alpha]
\end{equation}
for each $ \alpha \leq 2 $.

\section{Asymptotic behavior of the Shark Random Swim}
\label{asymptotic shark}
\subsection{Supercritical case $\alpha p > 1$}
\label{supercritical case}
Recall  that the position of the Shark Random Swim at time $ n $ can be expressed as
\[S_n= \sum_{i=1}^n \vert c_{i,n} \vert \xi_i,\]
where  $ \xi_i $, for $ i \in \mathbb{N} $, are i.i.d.  $ d $-dimensional standard isotropic strictly stable random variables with zero shift and stability parameter $ \alpha \in (0,2] $, that is
\[\mathbb{E}\left[e^{i \langle \theta,  \xi_1 \rangle} \right]=e^{- \Vert \theta \Vert^\alpha}, \quad \theta \in \mathbb{R}^d.\]
 Note that if $ \alpha p > 1 $, then $ \alpha \in (1,2] $. The limiting behavior of the  random walk is explained by the limiting behavior of the cluster sizes. Recall that, by Lemma \ref{Mittag leffler2}, for each $ i \geq 1 $ the following limit
\[\lim_{n \rightarrow \infty} \frac{1}{n^p}\vert c_{i,n} \vert = X_i \]
 exists in $ (0, \infty) $ almost surely. We then have:
\begin{theorem} \label{huiii}
Let $ \alpha p > 1 $, let $ t \in \mathbb{R}_+ $, and let $ Z:= \sum_{i=1}^ {\infty} X_i \xi_i  $. Then  $\vert  Z \vert < \infty$  almost surely and
\[ \frac{1}{n^p} S_{\lfloor nt\rfloor}\rightarrow t^p Z  \] 
in probability as $ n $ tends to infinity. 
\end{theorem}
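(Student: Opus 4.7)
My plan is to reduce the theorem to showing convergence of a certain nonnegative series to zero in probability, which I will then establish by a truncation argument. Before anything, I would check that $Z$ is finite: conditionally on $(X_i)_{i\geq 1}$, the series $Z=\sum_i X_i\xi_i$ is, by $\alpha$-stability and independence of the $\xi_i$, an $\alpha$-stable random variable with scale $(\sum_i X_i^\alpha)^{1/\alpha}$. Since $\sum_i\mathbb{E}[X_i^\alpha]<\infty$ by Lemma~\ref{Mittag leffler2}, Fubini gives $\sum_i X_i^\alpha<\infty$ a.s., so $|Z|<\infty$ a.s. Next, writing $\frac{1}{n^p}S_{\lfloor nt\rfloor}=(\lfloor nt\rfloor/n)^p\cdot\frac{1}{\lfloor nt\rfloor^p}S_{\lfloor nt\rfloor}$ and using $(\lfloor nt\rfloor/n)^p\to t^p$ reduces the claim to $\frac{1}{n^p}S_n\to Z$ in probability.

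Coupling the $\xi_i$ appearing in $S_n$ and in $Z$ to be the same, I would compute the characteristic function of the difference. Conditioning on the $\sigma$-algebra $\mathcal G$ generated by the random recursive tree, the percolation and the limits $(X_i)$, and using that the $\xi_i$ are independent of $\mathcal G$ and isotropic $\alpha$-stable,
\[
\mathbb{E}\!\left[e^{i\langle\theta,\,\frac{1}{n^p}S_n-Z\rangle}\right]
\;=\;\mathbb{E}\!\left[\exp\!\left(-\|\theta\|^{\alpha}\sum_{i=1}^{\infty}\Bigl|\tfrac{|c_{i,n}|}{n^p}-X_i\Bigr|^{\alpha}\right)\right],
\]
with the convention $|c_{i,n}|=0$ for $i>n$. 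Since convergence in probability to the fixed $\mathbb{R}^d$-valued random variable $Z$ is equivalent to the characteristic function of $\frac{1}{n^p}S_n-Z$ tending to $1$, it is enough to show
\[\Delta_n\;:=\;\sum_{i=1}^{\infty}\Bigl|\tfrac{|c_{i,n}|}{n^p}-X_i\Bigr|^{\alpha}\longrightarrow 0\quad\text{in probability.}\]

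I would handle $\Delta_n$ by truncation. For each fixed $K$, Lemma~\ref{Mittag leffler2} gives $\sum_{i=1}^{K}\bigl||c_{i,n}|/n^p-X_i\bigr|^{\alpha}\to 0$ a.s. Since $\alpha p>1$ and $p<1$ force $\alpha\in(1,2]$, the elementary bound $|a-b|^\alpha\leq 2^{\alpha-1}(|a|^\alpha+|b|^\alpha)$ dominates the tail by $2^{\alpha-1}\sum_{i>K}\bigl(|c_{i,n}|^\alpha/n^{\alpha p}+X_i^\alpha\bigr)$. The $X_i$-piece has expectation $\sum_{i>K}\mathbb{E}[X_i^\alpha]\to 0$ as $K\to\infty$ by the summability in Lemma~\ref{Mittag leffler2}. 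The main obstacle, and the step I expect to be the most technical, is a uniform-in-$n$ bound of the form $\mathbb{E}[|c_{i,n}|^\alpha]/n^{\alpha p}\leq g(i)$ with $\sum_i g(i)<\infty$. I would derive it from $|c_{i,n}|\leq|c_{i,n}^\prime|$, the identity $|c_{i,n}^\prime|\overset{d}{=}|c_{1,Y(n,i)}|$ with $Y(n,i)$ Beta-binomial from \eqref{subtree rooted}, the uniform estimate $\mathbb{E}[|c_{1,m}|^\alpha]\leq C m^{\alpha p}$ obtained by applying Jensen's inequality (since $\alpha\leq 2$) to Lemma~\ref{Kursten}, and a short Pólya-urn computation yielding $\mathbb{E}[(Y(n,i)/n)^{\alpha p}]\leq C\,i^{-\alpha p}$ uniformly in $n\geq i$; this matches the decay $\mathbb{E}[X_i^\alpha]\sim C\,i^{-\alpha p}$ read off Lemma~\ref{Mittag leffler2}. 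With these pieces in hand, $\mathbb{E}[\Delta_n]$ is made arbitrarily small by first choosing $K$ large (to kill the tail uniformly in $n$) and then $n$ large, so $\Delta_n\to 0$ in $L^1$, hence in probability, and the theorem follows.
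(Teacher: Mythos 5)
Your proposal is correct and follows essentially the same route as the paper: finiteness of $Z$ via conditional $\alpha$-stability together with $\sum_i X_i^\alpha<\infty$, reduction through the conditional characteristic function to showing $\sum_i\bigl|\,|c_{i,n}|/n^p-X_i\bigr|^\alpha\to 0$, and the same key summable uniform bound $\mathbb{E}[|c_{i,n}|^\alpha]/n^{\alpha p}\leq C\,i^{-\alpha p}$ obtained from the Beta-binomial representation \eqref{subtree rooted} and Jensen's inequality. The only cosmetic difference is that the paper proves $\sum_i\mathbb{E}\bigl[\bigl|\,|c_{i,n}|/n^p-X_i\bigr|^\alpha\bigr]\to 0$ directly by dominated convergence for series, establishing $L^\alpha$-convergence of each term via Scheff\'e's lemma, whereas you truncate at $K$; to claim the head converges in $L^1$ rather than merely almost surely you would need exactly that Scheff\'e step, or else run your truncation at the level of probabilities via Markov's inequality, which your estimates already support.
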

\begin{proof}
We first show that $ \vert Z \vert < \infty $ a.s. Note that, conditionally on $ X_1,...,X_n $, we have
\[\sum_{i=1}^n X_i \xi_i \overset{d}{=} (X_1^\alpha+...+X_n^\alpha)^ {\frac{1}{\alpha} }\xi_1, \]
and we conclude since $ \sum_{i} X_i^\alpha < \infty $ a.s. by Lemma \ref{Mittag leffler2}.
  Next, we aim to show that
\begin{equation} 
\label{convergence in distribution}
\lim_{n \rightarrow \infty} \frac{1}{n^p}S_{\lfloor nt \rfloor}-t^p Z = 0
\end{equation}
in distribution and thus in probability. Let $ \theta \in \mathbb{R}^2 $, we then  have 
\begin{align*}
\exp & \left( i \left\langle \theta,\frac{1}{n^p}S_{\lfloor nt \rfloor}-t^p Z   \right\rangle \right) \\ &=\exp \left( i \left\langle \theta,  \sum_{k=1}^\infty \left(\frac{1}{n^p} \vert c_{k, \lfloor t n \rfloor} \vert 1_{(k \leq \lfloor t n \rfloor)} -t^p X_k \right) \xi_k \right\rangle \right).
\end{align*}
If we let $\mathcal{F}_n:= \sigma(\vert c_{k,1} \vert,...,\vert c_{k,n} \vert, 1 \leq k \leq n)$, then
\begin{align*}
 \mathbb{E} & \left[  \exp \left( i \left\langle \theta, \frac{1}{n^p}S_{\lfloor nt \rfloor}-t^p Z  \right\rangle \right) \bigg\vert \mathcal{F}_\infty \right] \\
&= \exp \left( - \Vert \theta \Vert^\alpha \sum_{k=1}^\infty \bigg\vert \frac{1}{n^p} \vert c_{k, \lfloor t n \rfloor} \vert 1_{(k \leq \lfloor t n \rfloor)} -t^p X_k \bigg\vert^\alpha \right),
\end{align*}
and we are left to show that
\begin{eqnarray} \label{summ} \lim_{n \rightarrow \infty}  \sum_{i=1}^\infty \mathbb{E} \left[ \bigg\vert \frac{1}{n^p}\vert c_{i,n} \vert 1_{( i \leq n )}-X_i \bigg\vert^\alpha \right] = 0. \end{eqnarray}
Indeed, then  \[ \lim_{n \rightarrow \infty} \sum_{i=1}^\infty \bigg\vert \frac{1}{n^p} \vert c_{i, \lfloor t n \rfloor} \vert 1_{(i \leq \lfloor t n \rfloor)} -t^p X_i \bigg\vert^\alpha = 0 \] in probability, and  by dominated convergence we  have
\begin{eqnarray*}
\lim_{n \rightarrow \infty} \mathbb{E} \left[ \exp\left( i  \left\langle \theta, \frac{1}{n^p}S_{\lfloor nt \rfloor}-t^p Z \right\rangle \right)   \right] =  1
\end{eqnarray*}
for every $ \theta \in \mathbb{R}^d $, which proves  (\ref{convergence in distribution}).
To show Equation (\ref{summ}) we first show  \[ \lim_{n \rightarrow \infty}  \mathbb{E} \left[ \bigg\vert \frac{1}{n^p}\vert c_{i,n} \vert -X_i \bigg\vert^\alpha \right] = 0, \] for all $ i \in \mathbb{N} .$ Indeed, recall  that $ \vert c_{k,n}^ {\prime} \vert  $ denotes the size of the subtree rooted at node $ k $, and that it is a cluster if the edge connecting $ k $ to its parent is deleted. Hence
\[ \mathbb{E} \left[ \vert c_{i,n}\vert^{\alpha} \right]= (1-p)\mathbb{E} \left[ \vert c_{i,n}^{\prime} \vert^{\alpha} \right],\]
for all $ i \geq 2 $. We have seen in Equation (\ref{subtree rooted}) that   $ \vert c_{k,n}^\prime \vert  \overset{d}{=} \vert c_{1,Y(n,k)} \vert $, where $Y(n,k)  $ is Beta-Binomial distributed  with parameter $ (n-k,1,k-1) $ and further independent of $ \vert c_{1,i} \vert $, for all $ i \in \mathbb{N} $. By  Lemma \ref{Kursten} we thus have
\begin{eqnarray*}
 \frac{1}{n^{2p}}\mathbb{E} \left[ \vert c_{1,Y(n,k)} \vert^2 \big\vert  Y(n,k)\right] & = & \frac{1}{n^{2p}} \frac{1}{p}\left(\frac{\Gamma(2p+Y(n,k))}{\Gamma(2p)\Gamma(Y(n,k))} - \frac{\Gamma(p+Y(n,k))}{\Gamma(p)\Gamma(Y(n,k))} \right). 
\end{eqnarray*}
By the asymptotics $  \Gamma(2p+n) \sim \Gamma(n)n^{2p} $ and Lemma \ref{Polya} we  have
\[\lim_{n \rightarrow \infty} \frac{1}{n^{2p}}\mathbb{E} \left[ \vert c_{1,Y(n,k)} \vert^2 \big\vert  Y(n,k) \right]   = \frac{2}{\Gamma(2p+1)} \textnormal{B}(k-1)^{2p} \quad a.s,\]
where B($ k-1 $) is a Beta distributed random variable with parameter $ k-1 $.  Since
\[ \frac{1}{n^{2p}}\mathbb{E} \left[ \vert c_{1,Y(n,k)} \vert^2 \big\vert  Y(n,k) \right] \leq \frac{2}{\Gamma(2p+1)}\sup_{n>0} \frac{\Gamma(2p+ n)}{n^{2p}\Gamma(n)} < \infty,\]
we get by dominated convergence  
\begin{eqnarray*}
\lim_{n \rightarrow \infty} \frac{1}{n^{2p}}\mathbb{E} \left[ \vert c_{i,n} \vert^2 \right] &=& \lim_{n \rightarrow \infty} \frac{1}{n^{2p}}(1-p)\mathbb{E} \left[ \vert c^\prime_{i,n} \vert^2 \right] \\ &=& \frac{2(1-p)}{\Gamma(2p+1)}\mathbb{E}[\textup{B}(k-1)^{2p}] = \mathbb{E}[X_i^ 2].
\end{eqnarray*}
We then conclude by Scheffe's Lemma.
 We are thus left to justify the exchange of limit and sum. We shall show that the sum can be bounded by a summable series for all $ n $. We have
\[  \mathbb{E}\left[ \bigg\vert \frac{1}{n^p}\vert c_{i,n} \vert -X_i \bigg\vert^\alpha \right] \leq \frac{1}{n^{\alpha p}} \mathbb{E} [\vert c_{i,n} \vert^\alpha] + \mathbb{E}[X_i^\alpha],\]

and for all $ k \leq n $:
\begin{eqnarray*}
\frac{1}{n^{\alpha p}} \mathbb{E} [\vert c_{k,n} \vert^\alpha] &=& (1-p) \cdot \frac{1}{n^{\alpha p}}\mathbb{E}\left[ \vert c_{k,n}^{\prime} \vert^{\alpha} \right] \\ & = & (1-p) \cdot \frac{1}{n^{\alpha p}} \sum_{i=1}^{n-k} \mathbb{E}\left[ \vert c_{1,i} \vert^{\alpha} \right] \mathbb{P}(Y(n,k) = i).
\end{eqnarray*}
By Equation (\ref{limit expectation2})  there exists an integer $ i_0 $ such that for all $ i \geq i_0 $, we have $ \mathbb{E}\left[ \vert c_{1,i} \vert^{\alpha} \right] \leq c_0 \cdot i^{\alpha p}$, where $c_0:= 2\mathbb{E}[X_1^{\alpha}]$.
Since $ \alpha p < 2 $, we deduce
\begin{eqnarray*} \sum_{i=i_0}^{n-k} \mathbb{E}\left[ \vert c_{1,i} \vert^{\alpha } \right] \mathbb{P}(Y(n,k) = i)  & \leq & c_0 \cdot 
\mathbb{E}[Y(n,k)^{\alpha p}] \\ & \leq & c_0 \cdot \mathbb{E}[Y(n,k)^2]^{\alpha p/2}. \end{eqnarray*}
 Computing the second moment of a Beta-Binomial random variable we  get 
\begin{eqnarray*}
\frac{1}{n^{\alpha p}}\mathbb{E}[Y(n,k)^2]^{\alpha p/2} & = &\frac{1}{n^{\alpha p}} \cdot \frac{((n-k)(2(n-k)+k-1)))^{\alpha p /2}}{(k(k+1))^{\alpha p/2}} \\ & \leq & \frac{3}{k^{\alpha p}}.
\end{eqnarray*}
Using the elementary bound for all $ k \leq n $,
\[\frac{1}{n^{\alpha p}}\sum_{i=1}^{i_0-1} \mathbb{E}\left[ \vert c_{1,i} \vert^{\alpha} \right] \mathbb{P}(Y(n,k) = i) \leq \frac{i_0^{\alpha}}{k^{\alpha p}},\]
 we thus have  for all $ 2 \leq k  $ 
\begin{equation} \label{jetzt} \frac{1}{n^{\alpha p}}\mathbb{E}\left[ \vert c_{k,n}^{\prime} \vert^{\alpha} \right] \leq \frac{3c_0+i_0^{\alpha}}{k^{\alpha p}} 
\end{equation} and

\[  \mathbb{E}\left[ \bigg\vert \frac{1}{n^p}\vert c_{i,n} \vert  -X_i \bigg\vert^\alpha \right] \leq \frac{3c_0+i_0^\alpha}{i^{\alpha p}} + \mathbb{E}[X_i^\alpha],\]
which justifies the interchange of the limit, since   the series $\sum_{i=1}^\infty \mathbb{E}[X_i^\alpha] $ converges for $ \alpha p >1 $, by Lemma \ref{Mittag leffler2}. 

\end{proof}

\subsection{Subcritical case $ \alpha p < 1 $}
\label{erdb}
In this section we shall study the subcritical case. Let $ p<1 $ and us  define the function $f: [0,1-p) \rightarrow \mathbb{R}$, \[ f(x)= \sum_{k=1}^\infty k^{\alpha} \left(1-\left(\frac{x}{1-p}\right)^ p \right)^ {k-1}  \left(\frac{x}{1-p}\right)^p, \] 
that is $ f(x)=\mathbb{E}[G^\alpha] $, where $ G $ is a 
 geometric distributed random variable with parameter $(\frac{x}{1-p})^p  $. Note that $ f$ is Riemann integrable on the interval $ (0,1-p) $, since it is monotone decreasing on $ (0,1-p) $. Then the constant 
\[c(\alpha,p):=  \int_0^{1-p} f(x) dx\]
is finite since  $ f(x) \leq \left( \frac{1-p}{x}\right)^{\alpha p} $ and $ \alpha p < 1 $.
We aim to show:
\begin{theorem}
\label{second case}
Let $ \alpha p <1 $ and let $ t \in \mathbb{R}_+ $. We then have the convergence in distribution \[ \lim_{n \rightarrow \infty}  \left(\frac{1}{n}\right)^{\frac{1}{\alpha}} S_{\lfloor tn \rfloor} = t^{\frac{1}{\alpha}} S,\] where $ S $ is a  $ d $-dimensional isotropic $ \alpha $-stable distributed random variable with zero shift and scale parameter $ c(\alpha,p)^{\frac{1}{\alpha}} $, that is
\[\mathbb{E}\left[e^{i \langle \theta, S \rangle } \ \right]= \exp\left(-c(\alpha,p) \cdot \Vert \theta \Vert^\alpha\right), \quad \theta \in \mathbb{R}^d.\]

\end{theorem}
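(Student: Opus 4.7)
My strategy is to condition on the random cluster sizes in order to deal with the isotropic $\alpha$-stable steps $\xi_k$ explicitly, reducing the theorem to the convergence in probability of the scalar random variable
\[
  A_n := \frac{1}{n}\sum_{k=1}^{n} |c_{k,n}|^\alpha.
\]
Writing $n' := \lfloor tn\rfloor$ and using that the $\xi_k$ are i.i.d.\ isotropic $\alpha$-stable with unit scale and independent of $\mathcal{F}_\infty$,
\[
  \mathbb{E}\bigl[e^{i\langle\theta,\, n^{-1/\alpha} S_{n'}\rangle}\bigm|\mathcal{F}_\infty\bigr]
  = \exp\!\left(-\|\theta\|^\alpha\,\tfrac{n'}{n}\,A_{n'}\right)\in [0,1].
\]
Hence by bounded convergence it suffices to prove $A_{n'}\to c(\alpha,p)$ in probability, after which the deterministic prefactor $n'/n\to t$ turns the limit into $\exp(-t\,c(\alpha,p)\|\theta\|^\alpha)$, which is precisely the characteristic function of $t^{1/\alpha}S$.

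To identify the limit I would rely on the Yule embedding used in the proof of Lemma \ref{Ornstein1}. Conditionally on the event that $k\geq 2$ roots its own cluster (probability $1-p$, independent of everything else), the cluster $c_{k,n}$ is an independent rate-$p$ Yule process started from one individual at time $T_k$ and observed at time $T_n$. Since such a Yule process at elapsed time $s$ is geometrically distributed with parameter $e^{-ps}$, writing $\varphi(q):=\mathbb{E}[G_q^\alpha]$ for $G_q$ geometric with parameter $q$ gives
\[
  \mathbb{E}\bigl[|c_{k,n}|^\alpha\bigm|T_k,T_n\bigr] = (1-p)\,\varphi\bigl(e^{-p(T_n-T_k)}\bigr),\qquad k\ge 2.
\]
Combining $e^{-T_n}n\to W$ a.s.\ (Equation (\ref{tuttituuuuuu})) with the analogous statement for $T_k$ yields $e^{-(T_n-T_k)}\sim k/n$, and a Riemann-sum argument together with the change of variable $x=(1-p)(k/n)$ gives
\[
  \mathbb{E}[A_n]\,\longrightarrow\,(1-p)\!\int_0^1\!\varphi(y^p)\,dy \;=\; \int_0^{1-p}\! f(x)\,dx \;=\; c(\alpha,p).
\]
Any bounded collection of ``root-like'' clusters (whose sizes are $O(n^p)$ by Lemma \ref{Mittag leffler2}) contributes only $O(n^{\alpha p-1})=o(1)$ since $\alpha p<1$, so these boundary terms do not perturb the Riemann limit.

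To upgrade convergence in mean to convergence in probability I would show $\mathrm{Var}(A_n)\to 0$. The diagonal contributions $\mathbb{E}[|c_{k,n}|^{2\alpha}]$ can be handled through the identity $|c_{k,n}'|\stackrel{d}{=}|c_{1,Y(n,k)}|$ from Lemma \ref{Polya} combined with the second-moment estimate of Lemma \ref{Kursten}, yielding the same $k^{-\alpha p}$ bound as Equation (\ref{jetzt}); for the off-diagonal terms $\mathbb{E}[|c_{j,n}|^\alpha|c_{k,n}|^\alpha]$ with $j<k$, I would condition on the tree structure above $k$ and use the independence of the percolation decisions on disjoint edges to extract an approximate product $\mathbb{E}[|c_{j,n}|^\alpha]\mathbb{E}[|c_{k,n}|^\alpha]+o(1)$. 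Once $A_{n'}\to c(\alpha,p)$ in probability, bounded convergence on the conditional characteristic functions completes the proof. The main obstacle will be making the Yule/geometric identification uniform in $k$: the $L^2$ control on $|c_{k,n}|$ degrades when $k$ is small, so the Riemann-sum approximation should really be run from a slowly growing threshold $k_0(n)$ onward, with a separate $L^1$ estimate ensuring that the $k<k_0(n)$ terms contribute negligibly—a splitting analogous to the one carried out in the proof of Theorem \ref{huiii}.
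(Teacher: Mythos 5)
Your overall architecture coincides with the paper's: condition on the cluster sizes to reduce the theorem to the convergence in probability of $\frac{1}{n}\sum_{k}\vert c_{k,n}\vert^{\alpha}$, identify the limit through the Yule-process-with-mutation embedding (each cluster is geometric with parameter $e^{-p\cdot(\text{elapsed time})}$), pass to a Riemann sum to recover $c(\alpha,p)=\int_0^{1-p}f(x)\,dx$, and close with a second-moment argument. Your computation of the mean is consistent with the paper's (your change of variables $x=(1-p)y$ correctly turns $(1-p)\int_0^1\varphi(y^p)\,dy$ into $\int_0^{1-p}f$), and the reduction via conditional characteristic functions is exactly Equation-level identical to what the paper does.

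The gap is in the variance step, which is the genuinely hard part of this proof. Your treatment of the off-diagonal terms --- ``condition on the tree structure above $k$ and use the independence of the percolation decisions on disjoint edges to extract an approximate product $\mathbb{E}[\vert c_{j,n}\vert^\alpha]\mathbb{E}[\vert c_{k,n}\vert^\alpha]+o(1)$'' --- is an assertion, not an argument. The percolation marks are independent, but the subtree sizes $\vert c_{j,n}'\vert$ and $\vert c_{k,n}'\vert$ are not: they compete for the same $n$ nodes, and after dividing by $n^2$ and summing over order $n^2$ pairs you need the covariances to vanish \emph{relative to the products of the means}, uniformly in $(j,k)$; a bare ``$+o(1)$'' per pair does not suffice. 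A closely related issue infects your identification $\mathbb{E}[\vert c_{k,n}\vert^\alpha\mid T_k,T_n]=(1-p)\varphi(e^{-p(T_n-T_k)})$: the elapsed time $T_n-T_k$ is itself a functional of the type-$k$ population, so the geometric law with that random parameter cannot simply be plugged in. The paper resolves both difficulties at once with Lemma \ref{sequuence}: using the martingale limit $e^{-T(k)}k\to W$ and the law of large numbers for the number of types, it replaces the random time windows by deterministic ones on events of probability tending to one, thereby sandwiching each $Y_i(T(n))$ between random variables $\underline{X}_i$ and $\overline{X}_i$ that are geometric with \emph{deterministic} parameters and \emph{independent across $i$}. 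The covariances of the true cluster sizes are then squeezed between differences of products of expectations of independent variables, which is what makes $V^2_{n,m}\to 0$ in Lemma \ref{expectations second moment}. You correctly sense that uniformity in $k$ is the obstacle, but you locate it in the small-$k$ regime (which the paper handles by the crude $k^{-\alpha p}$ bound on $i\le \lfloor\delta n\rfloor$ and letting $\delta\to 0$), whereas the missing idea is the independent sandwich for the bulk of the indices.
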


The proof of Theorem \ref{second case}  will require some  results about Yule processes and Yule processes with mutation. Let $ Y=(Y_t)_{t \in \mathbb{R}_+} $ denote the  Yule process started from $ Y_0=1 $, such that for $ t \in \mathbb{R}_+ $, $ Y_t $ describes the number of individuals alive at time $ t $, when each individual lives forever and gives birth to children at  rate $ 1 $. The following lemma is well-known:

\begin{lemma}
\label{martingale}
The process $ e^{-t} Y(t)$ is a martingale. Its terminal value $ W $ exists a.s. and is $\exp( 1 )$ distributed.
\end{lemma}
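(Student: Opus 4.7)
The plan is to break the statement into three pieces: the martingale property, almost sure convergence, and identification of the limit law as standard exponential.

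First, for the martingale property, I would rely on the branching structure of the Yule process. Conditionally on the state $Y(s) = k$ at time $s$, the strong Markov property together with the fact that each of the $k$ individuals alive at time $s$ evolves into an independent copy of the Yule process started from one individual gives $\mathbb{E}[Y(s+t)\mid \mathcal{F}_s] = Y(s)\,\mathbb{E}[Y(t)]$. The one-point mean $m(t) := \mathbb{E}[Y(t)]$ satisfies the ODE $m'(t) = m(t)$ with $m(0)=1$, derived from the forward Kolmogorov equations (the transition rate from $k$ to $k+1$ is $k$). Hence $m(t) = e^t$, which yields $\mathbb{E}[e^{-(s+t)}Y(s+t)\mid\mathcal{F}_s] = e^{-s}Y(s)$, establishing the martingale property.

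Next, almost sure convergence follows immediately: $e^{-t}Y(t)$ is a non-negative martingale, so by Doob's convergence theorem its terminal value $W := \lim_{t\to\infty} e^{-t}Y(t)$ exists almost surely and is integrable with $\mathbb{E}[W] \leq 1$. To upgrade this to $L^2$ convergence (and thus uniform integrability), I would check that the martingale is bounded in $L^2$. Solving the forward equations for the Yule process gives the explicit geometric distribution $\mathbb{P}(Y(t)=k) = e^{-t}(1-e^{-t})^{k-1}$ for $k\geq 1$, from which one computes $\mathbb{E}[Y(t)^2] = 2e^{2t} - e^t$, so $\mathbb{E}[(e^{-t}Y(t))^2] = 2 - e^{-t} \leq 2$ uniformly in $t$.

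Finally, to identify the distribution of $W$, I would compute the Laplace transform of $e^{-t}Y(t)$ directly. Using the geometric law,
\[
\mathbb{E}\bigl[e^{-\lambda e^{-t} Y(t)}\bigr] = \sum_{k=1}^{\infty} e^{-\lambda e^{-t} k}\, e^{-t}(1-e^{-t})^{k-1} = \frac{e^{-t}\,e^{-\lambda e^{-t}}}{1-(1-e^{-t})\,e^{-\lambda e^{-t}}}.
\]
A first-order expansion of the denominator in $e^{-t}$ as $t\to\infty$ gives the limit $(1+\lambda)^{-1}$, which is the Laplace transform of $\operatorname{Exp}(1)$. Combined with the $L^2$ (hence in-distribution) convergence from the previous step, this forces $W \sim \operatorname{Exp}(1)$.

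The main obstacle is the third step: the a.s.\ martingale convergence alone does not pin down the law of $W$, and one must carefully combine it with an independent computation (either the Laplace transform above, or a fixed-point argument using the branching decomposition $W \stackrel{d}{=} e^{-\tau}(W_1+W_2)$ where $\tau \sim \operatorname{Exp}(1)$ is the first birth time and $W_1,W_2$ are independent copies of $W$) to identify the limit distribution.
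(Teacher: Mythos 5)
Your proposal is correct and complete. The paper states this lemma without proof, introducing it as a well-known fact about Yule processes, so there is no in-paper argument to compare against. Your route is the standard one and every step checks out: the branching/Markov argument gives the martingale property; the explicit geometric marginal $\mathbb{P}(Y(t)=k)=e^{-t}(1-e^{-t})^{k-1}$ yields $\mathbb{E}[(e^{-t}Y(t))^2]=2-e^{-t}$, hence $L^2$-boundedness and uniform integrability; and the Laplace transform computation correctly produces the limit $(1+\lambda)^{-1}$, which together with the almost sure (hence distributional) convergence identifies $W\sim\operatorname{Exp}(1)$. You are also right to flag that Doob's theorem alone does not determine the law of $W$ and that either the transform computation or the fixed-point identity $W\overset{d}{=}e^{-\tau}(W_1+W_2)$ is needed to close that gap; the former is preferable here since it avoids having to prove uniqueness of the fixed point.
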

Now, assume that we assign a type $ i \in \mathbb{N} $ to each individual. More precisely, assume that the first individual is of type $ 1 $, and each child of an individual either adopts the type of its parent with probability $ p $, or is of a new type (meaning that there is  no individual alive of the same type) with probability $ (1-p) $. This can be understood in the way that each child of our Yule process is either a clone of its parent or a new mutant. 
Let $ Y_i(t) $ denote the population size of individuals of type $ i \in \mathbb{N} $, and let us further introduce the birth time of the first individual of type $ i $, that is $ b_i:=\inf\lbrace t \geq 0: Y_i(t) >0 \rbrace  $. From construction we then have the following lemma.

\begin{lemma}
\label{Yule}
We have
\begin{enumerate}
\item The processes $ (Y_i(t+b_i), t \geq 0) $, $ i \geq 1 $ are i.i.d. Yule processes with birth rate $ p $.
\item As a consequence  $ Y_i(t+b_i) $ is geometric distributed with parameter $ e^{-t p} $, for each $ t \in \mathbb{R}_+ $.
\end{enumerate}
\end{lemma}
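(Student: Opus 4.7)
The plan is to construct the mutating Yule process explicitly from independent primitives and then identify each type's subpopulation as an independent rate-$p$ Yule process via thinning and the branching property.

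First I would attach to every individual an independent Exp$(1)$ sequence of inter-birth times (producing the usual rate-$1$ Yule genealogy) and, to every birth, an independent Bernoulli$(p)$ ``clone'' mark. A clone mark means the child inherits the parent's type; otherwise the child is assigned a brand-new type. With this construction all randomness is explicit, and the clone/mutant marks are independent of the underlying tree.

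For part 1, fix a type $i$ and look at its founder, alive from time $b_i$ onward. That founder emits children at rate $1$, each being a clone with probability $p$, so by Poisson thinning its type-$i$ clone offspring arrive at rate $p$. Each subsequent type-$i$ descendant behaves identically by construction, so $(Y_i(t+b_i))_{t\ge 0}$ is a pure-birth process with linear rate $p$ started from a single individual, i.e.\ a Yule process of parameter $p$. Independence across types follows from the branching property: whenever a mutant is created, the clone-subtree rooted at that mutant is a deterministic function of holding-time clocks and coin flips attached to its descendants only, and for different mutants these packets of randomness are disjoint. Formally, one conditions on the mutation times $(b_j)_{j\ge 1}$ and invokes the strong branching property of the Yule tree.

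Part 2 is then a standard calculation. Setting $q_n(t)=\mathbb{P}(Y_i(t+b_i)=n)$, the Kolmogorov forward equation
\[ q_n'(t) = -pn\, q_n(t) + p(n-1)\, q_{n-1}(t), \qquad n\ge 1, \; q_n(0)=\delta_{n,1}, \]
is solved by the ansatz $q_n(t)=e^{-pt}(1-e^{-pt})^{n-1}$, which is exactly the geometric law with success probability $e^{-pt}$. Alternatively, writing $T_{n-1}=\sum_{k=1}^{n-1}E_k/(kp)$ for the epoch of the $(n-1)$-st birth with $E_k$ i.i.d.\ Exp$(1)$, a short induction on $n$ using a Beta-type integral yields $\mathbb{P}(Y_i(t+b_i)\ge n)=\mathbb{P}(T_{n-1}\le t)=(1-e^{-pt})^{n-1}$.

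The main obstacle is the independence claim in part 1 rather than the distributional identification in part 2. Single-type thinning is routine, but because a type-$j$ mutant for $j>i$ can be born out of the type-$i$ subtree, the different type processes share a common ambient genealogy. The cleanest remedy is precisely the marked-branching construction above, in which each type's clone-subtree depends only on a mutually disjoint packet of i.i.d.\ holding times and Bernoulli marks, so the required independence is built into the construction.
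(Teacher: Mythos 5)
Your argument is correct, and it is essentially the argument the paper implicitly relies on: the paper states this lemma without proof, asserting only that it holds ``from construction,'' and your explicit marked-branching construction with Poisson thinning (giving the rate-$p$ clone births) together with the standard identification of the rate-$p$ Yule law as geometric with parameter $e^{-pt}$ is precisely that construction spelled out. The independence across types that you flag is indeed the only delicate point, and conditioning on the mutant birth epochs and invoking the branching property, as you propose, settles it.
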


To see the connection with the cluster sizes, define the first time when there are $ n $ individuals alive $ T(n) := \inf\lbrace t \in \mathbb{R} : Y(t) = n \rbrace $. 
We then have  

\begin{equation} 
\label{equality in d} 
\vert c_{i,n} \vert \overset{d}{=} Y_i(T(n))  \quad  \textnormal{for all} \quad  n \in \mathbb{N}. 
\end{equation}
This equality in distribution will be useful to prove Theorem \ref{second case}. Recall that if $\mathcal{F}_n= \sigma(\vert c_{i,1} \vert,...,\vert c_{i,n} \vert, 1 \leq i \leq n)$, then
\begin{eqnarray*}
\mathbb{E}\left[\exp \left(i \left\langle \theta,   \left(\frac{1}{n}\right)^{\frac{1}{\alpha}} S_{\lfloor tn \rfloor}  \right\rangle  \right) \bigg\vert \mathcal{F}_n \right]   = \exp\left(-\Vert \theta \Vert^\alpha \frac{1}{n} \sum_{k=1}^{\lfloor tn \rfloor } \vert c_{k,\lfloor tn\rfloor} \vert^\alpha \right).
\end{eqnarray*}
By dominated convergence and Equation (\ref{equality in d}), it will thus  be enough to show the convergence in probability
\begin{equation}
\label{jetzt3}
\lim_{m \rightarrow \infty} \lim_{n \rightarrow \infty} \frac{1}{n}\sum_{i=1}^n  Y_i(T(n))^\alpha \cdot 1_{E_{n,m}} = c(\alpha,p) ,
\end{equation}
where $ E_{n,m} $ is a sequence of events with  \[ \lim_{m \rightarrow \infty} \lim_{n \rightarrow \infty} \mathbb{P}( E_{n,m}) =1.\]
 We  first aim to construct this sequence of events of the form $E_{n,m}= E_{m}(x_n,n) $ with $ x_n=\lfloor \delta n \rfloor  $ and $ 0 < \delta < 1 $. Let  $ \textup{Geom}(r) $ denote a generic geometric distributed random variable with parameter $ r $. We then have the following result.
\begin{lemma}
\label{sequuence}
Let
 $(x_n)_{n \in \mathbb{N}} $ be a sequence with $ 0 < x_n < n $ and $\lim_{n \rightarrow \infty} x_n = \infty $. Then there  exists  a positive sequence  $ \varepsilon_m $  with $ \varepsilon_m \downarrow 0 $ as $ m $ tends to infinity, and  a sequence of events $ E_{m}(x_n,n) $ for which \[ \lim_{m \rightarrow \infty} \lim_{n \rightarrow \infty} \mathbb{P}(E_{m}(x_n,n))=1,\] such that on $  E_{m}(x_n,n) $ we have the bounds
\[\underline{X}_i(n, \varepsilon_m) \leq Y_i(T(n)) \leq \overline{X}_i(n, \varepsilon_m), \quad \textnormal{for all}  \quad i \in \lbrace x_n,...,n \rbrace,\]
where  $  \overline{X}_i(n, \varepsilon_m) $   are independent  random variables with  \[ \overline{X}_i(n, \varepsilon_m) \overset{d}{=} \textup{Geom}\left( \left( \frac{i-1}{n(1-p)(1+\varepsilon_m)}\right)^p \right), \quad \textnormal{for} \quad i-1 \leq n(1-p)(1+\varepsilon_m), \]  and $ \overline{X}_i(n, \varepsilon_m)=0$, for $ i-1 > n(1-p)(1+\varepsilon_m).  $ Similarly  $\underline{X}_i(n, \varepsilon_m)  $ are  independent random variables with \[\underline{X}_i(n, \varepsilon_m) \overset{d}{=}  \textup{Geom}\left( \left( \frac{i+1}{n(1-p)(1-\varepsilon_m)}\right)^p \right),\quad \textnormal{for} \quad i+1 \leq n(1-p)(1-\varepsilon_m),\]
and $\underline{X}_i(n, \varepsilon_m)=0  $ for $ i+1 > n(1-p)(1-\varepsilon_m) $.

\end{lemma}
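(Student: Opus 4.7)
The plan is to exploit the Yule-process-with-mutation structure from Lemma \ref{Yule}. Writing $\tau_{n,i} := T(n) - b_i$, we have $Y_i(T(n)) = Y_i(b_i + \tau_{n,i})$; since $(Y_i(b_i + t))_{t \geq 0}$ is a Yule process of rate $p$, for a deterministic $s \geq 0$ the variable $Y_i(b_i + s)$ is geometric with parameter $e^{-ps}$. The strategy is to trap $\tau_{n,i}$ between two deterministic quantities $s_i^-(n,m) \leq \tau_{n,i} \leq s_i^+(n,m)$ on a good event $E_m(x_n)$, and then use the monotonicity of the sample paths of $Y_i$ to sandwich $Y_i(T(n))$ between $Y_i(b_i + s_i^-)$ and $Y_i(b_i + s_i^+)$. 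These sandwiching variables have exactly the announced geometric laws, and, by the i.i.d.\ assertion in Lemma \ref{Yule}, are independent across $i$.

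To control $\tau_{n,i}$, let $K_i$ denote the birth index of the first individual of type $i$, so that $b_i = T(K_i)$. The mutation indicators at successive births (for $k \geq 2$) are i.i.d.\ Bernoulli$(1-p)$, so the law of large numbers together with a Kolmogorov-type maximal inequality provides a high-probability event on which $K_i = (i/(1-p)) \cdot (1+\eta_{n,m}(i))$ with $\sup_{i \geq x_n}|\eta_{n,m}(i)| \to 0$. On the other hand, the convergence $e^{-T(k)} k \to W$ a.s.\ established in the proof of Lemma \ref{Ornstein1} yields, on a further high-probability event, the uniform estimate $T(n) - T(K_i) = \ln(n/K_i) + \zeta_{n,m}$ with $|\zeta_{n,m}| \to 0$. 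Combining,
\[
\tau_{n,i} \;=\; \ln\!\left(\frac{n(1-p)}{i}\right) + o(1),
\]
uniformly in $i \in \{x_n,\dots,n\}$, on a set $E_m(x_n)$ whose probability tends to one along the iterated limit.

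Next I would choose $\varepsilon_m \downarrow 0$ slowly enough (slower than the worst-case error terms) that on $E_m(x_n)$ the weaker inequalities
\[
\ln\!\frac{n(1-p)(1-\varepsilon_m)}{i+1} \;\leq\; \tau_{n,i} \;\leq\; \ln\!\frac{n(1-p)(1+\varepsilon_m)}{i-1}
\]
hold simultaneously for every relevant $i$. Define $s_i^+$ and $s_i^-$ as the right- and left-hand sides respectively, so that $e^{-p s_i^+} = ((i-1)/(n(1-p)(1+\varepsilon_m)))^p$ and symmetrically for $s_i^-$. Setting $\overline{X}_i(n,\varepsilon_m) := Y_i(b_i + s_i^+)$ when $s_i^+ > 0$, and zero otherwise, and likewise for $\underline{X}_i(n,\varepsilon_m)$, Lemma \ref{Yule} gives the claimed geometric laws and the independence across $i$, while pathwise monotonicity yields the sandwich $\underline{X}_i \leq Y_i(T(n)) \leq \overline{X}_i$ on $E_m(x_n)$.

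The main obstacle is the uniform control over $i \in \{x_n,\dots,n\}$ needed to build $E_m(x_n)$: one requires a simultaneous concentration estimate both for the $K_i$ with $i \geq x_n$ and for the $T(k)$ with $k$ in the corresponding range. The hypothesis $x_n \to \infty$ is essential here, because for bounded $i$ the relative error in the LLN approximation $K_i \approx i/(1-p)$ does not shrink, and indeed the additive perturbations $i \pm 1$ in the final bounds only become negligible once $i$ is large. Once this uniform control is secured, letting $n \to \infty$ before $m \to \infty$ and calibrating $\varepsilon_m$ to dominate the worst-case errors delivers $\mathbb{P}(E_m(x_n)) \to 1$ in the required iterated sense.
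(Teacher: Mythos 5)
Your proposal is correct and follows essentially the same route as the paper: the paper likewise traps $T(n)-b_i$ between deterministic quantities by combining the martingale limit $e^{-T(k)}k\to W$ (the $\ln W$ terms cancelling in the difference) with the law of large numbers for the type-count process $D(k)$ (the inverse of your $K_i$), and then sandwiches $Y_i(T(n))$ via path monotonicity and Lemma \ref{Yule}, absorbing the $i\pm 1$ and multiplicative errors into $\varepsilon_m$. The only cosmetic differences are that you parametrize by the birth rank $K_i$ rather than by $D(k)$, and invoke a maximal inequality where the paper simply intersects the almost-sure LLN events over $k\geq x_n$.
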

\begin{proof}
In order to apply Lemma \ref{Yule}, we aim to find a deterministic upper and lower bound for $ T(n)-b_i $. Let $0< \mu_m<1 $ be a sequence with $ \mu_m \downarrow 0 $, and define the sequence of events   \[ E_{k,m}^1:=\lbrace \omega \in \Omega :  W(\omega)(1-\mu_m) \leq e^{-T(k)(\omega)} k \leq W(\omega)(1+\mu_m) \rbrace. \]  By Lemma \ref{martingale} we have  $ \lim_{m \rightarrow \infty} \lim_{n \rightarrow \infty} \mathbb{P}(\bigcap_{k=n} E^1_{k,m})=1 $. Note that  on $  E^1_{k,m} $ we have
\[ \ln(k)- \ln(W) - \ln(1+\mu_m) \leq T(k)\leq \ln(k)- \ln(W) - \ln(1-\mu_m).  \]
 
Now let $ D(k) $ denote the number of different types that can be observed at time $ T(k) $. Since at time $ T(k) $ there are exactly $ k $ individuals alive, $ D(k) $ is  equal in distribution to $ \sum_{i=1}^k \textup{Ber}_i(1-p) $, where $ \textup{Ber}_i(1-p) $ are i.i.d. Bernoulli random variables. Let 
\[E^2_{k,m}:= \lbrace \omega  \in \Omega :k(1-p)(1-\mu_m) \leq D(k)(\omega)\leq k(1-p)(1+\mu_m) \rbrace, \]
by the law of large numbers $\lim_{m \rightarrow \infty}  \lim_{n \rightarrow \infty} \mathbb{P}(\bigcap_{k=n} E^2_{k,m})=1  $. On the event $ E^2_{k,m} $ we have  \[ b_{\lceil k(1-p)(1+\mu_m) \rceil} \geq T(k) ,\] and we conclude that on the event $  E^1_{k,m} \cap E^2_{k,m} $:
\begin{eqnarray*} b_k & \geq & T\left( \left\lfloor \frac{k}{(1-p)(1 + \mu_m)}  \right\rfloor \right) \\ & \geq & \ln(k-1)- \ln(W) - \ln(1-p)-2\ln(1+ \mu_m) .
 \end{eqnarray*}
 Now, define the event  $ E_{m}(x_n,n):= \bigcap_{k=x_n} (E_{k,m}^1 \cap E_{k,m}^2) $, and note that 
 \[\lim_{m \rightarrow \infty} \lim_{n \rightarrow \infty} \mathbb{P}(E_{m}(x_n,n))=1.\] For each $  i \in \lbrace x_n,...,n\rbrace $ we have on $ E_{m}(x_n,n) $ the inequality 
 \[T(n)-b_i  \leq \ln(n)-\ln(i-1)+\ln(1-p)+2\ln(1+\mu_m)-\ln(1-\mu_m).\]
If $ i -1 > n(1-p)(1+\varepsilon)  $ we have $ T(n)-b_i < 0 $ and hence at time $ T(n) $ there is no individual of type $ i $ alive, that is  $ Y_i(T(n))=0 $. For $ i -1 \leq n(1-p)(1+\varepsilon) $ we can  define 
  \[ \overline{X}_i(n, \varepsilon):=  Y_i(\ln(n)-\ln(i-1)+\ln(1-p)+\ln(1 + \varepsilon) +b_i), \]
  and choosing the sequence $ \varepsilon_m $ such that \[ \ln(1+\varepsilon_m) \geq 2\ln(1+\mu_m)-\ln(1-\mu_m),\] that is $ \varepsilon_m \geq\frac{1}{(1-\mu_m)} (\mu_m^2+3\mu_m)$, we then arrive at
   \[ Y_i(T(n)) \leq \overline{X}_i(n,\varepsilon_m) \quad \textnormal{for} \quad i \in \lbrace  x_n,...,n\rbrace.  \]
In the same spirit one can show the lower bound. 
 
\end{proof}
We are thus left to show:
\begin{lemma}
\label{expectations second moment}
Let $ 0 < \delta < 1 $ and define $ E_{n,m}:=E_m(\lfloor \delta n \rfloor,n) $. We have the convergence in probability \[\lim_{m \rightarrow \infty} \lim_{n \rightarrow \infty} \frac{1}{n} \sum_{i=1}^n  Y_i(T(n))^{\alpha} \cdot 1_{E_{n,m}}  = c(\alpha,p). \] 

\end{lemma}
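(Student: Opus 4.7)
I would split the sum at $i = \lfloor\delta n\rfloor$, writing $A_n := \frac{1}{n}\sum_{i=1}^{\lfloor\delta n\rfloor-1} Y_i(T(n))^\alpha$ and $B_n := \frac{1}{n}\sum_{i=\lfloor\delta n\rfloor}^n Y_i(T(n))^\alpha$, so that the quantity of interest on $E_{n,m}$ is $A_n\cdot 1_{E_{n,m}} + B_n\cdot 1_{E_{n,m}}$. Lemma \ref{sequuence} lets one control $B_n$ on $E_{n,m}$ via a sandwich by sums of independent geometric random variables, while (\ref{jetzt}) gives a uniform expectation bound on $A_n$. The plan is to show $B_n\cdot 1_{E_{n,m}} \xrightarrow{\mathbb{P}} \int_\delta^{1-p} f(y)\,dy$ (by letting $n\to\infty$ then $m\to\infty$), to show $\mathbb{E}[A_n] = O(\delta^{1-\alpha p})$, and to observe that $c(\alpha,p) - \int_\delta^{1-p} f(y)\,dy = \int_0^\delta f(y)\,dy$ is also $O(\delta^{1-\alpha p})$; the outer $\delta\downarrow 0$ step (as in the scheme leading to (\ref{jetzt3})) then closes the argument.

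\textbf{Tail analysis.} On $E_{n,m}$ Lemma \ref{sequuence} gives $\underline{X}_i(n,\varepsilon_m)^\alpha \leq Y_i(T(n))^\alpha \leq \overline{X}_i(n,\varepsilon_m)^\alpha$ for $i\in\{\lfloor\delta n\rfloor,\dots,n\}$, where the $\overline{X}_i$ are \emph{independent} and $\textup{Geom}((y_i/(1-p))^p)$-distributed with $y_i := (i-1)/(n(1+\varepsilon_m))$, so that $\mathbb{E}[\overline{X}_i(n,\varepsilon_m)^\alpha] = f(y_i)$ by the very definition of $f$. The mean
\[\frac{1}{n}\sum_{i=\lfloor\delta n\rfloor}^n \mathbb{E}\bigl[\overline{X}_i(n,\varepsilon_m)^\alpha\bigr] \;\xrightarrow[n\to\infty]{}\; (1+\varepsilon_m)\int_{\delta/(1+\varepsilon_m)}^{1-p} f(y)\,dy \;\xrightarrow[m\to\infty]{}\; \int_\delta^{1-p} f(y)\,dy\]
is then a Riemann sum whose convergence follows from Riemann integrability of $f$ on $(0,1-p)$. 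The lower sandwich yields the same limit. Convergence in probability of $(1/n)\sum_i \overline{X}_i(n,\varepsilon_m)^\alpha$ to its mean is obtained by Chebyshev using independence: the estimate $\mathbb{E}[\textup{Geom}(r)^{2\alpha}] = O(r^{-2\alpha})$ for small $r$ yields the variance bound $\textup{Var} \leq \frac{C}{n^2}\sum_{i \geq \lfloor\delta n\rfloor}(n/i)^{2\alpha p} = O_\delta(1/n)$ for each fixed $\delta>0$.

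\textbf{Small-index block and main obstacle.} Equation (\ref{jetzt}), namely $\mathbb{E}[|c_{k,n}|^\alpha] \leq C\, n^{\alpha p}/k^{\alpha p}$, gives $\mathbb{E}[A_n] \leq C\delta^{1-\alpha p}/(1-\alpha p)$, which matches the order of the residual $\int_0^\delta f(y)\,dy$ since $f(y)\sim C'y^{-\alpha p}$ as $y\downarrow 0$. The step I expect to be most delicate is not any single routine calculation but rather the \emph{coordination of the three limits} $n\to\infty$, $m\to\infty$, $\delta\downarrow 0$: the Chebyshev constant in the tail analysis degrades as $\delta\downarrow 0$ when $2\alpha p \geq 1$, so one must verify that, after fixing $m$ large, $n$ can be chosen large enough (depending on $\delta$) to render the variance contribution negligible before $\delta$ is sent to zero, and simultaneously that the upper and lower sandwiches collapse onto the same Riemann integral as $m\to\infty$. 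This bookkeeping of nested limits is the one place where genuine care, beyond routine estimation, is required to arrive at the claimed $c(\alpha,p)$.
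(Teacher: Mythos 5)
Your proposal is correct and follows essentially the same route as the paper: split the sum at $\lfloor\delta n\rfloor$, control the head block via the moment bound (\ref{jetzt}), identify the tail limit through the sandwich of Lemma \ref{sequuence} and a Riemann sum for $f_{\varepsilon_m}$ converging to $\int_\delta^{1-p}f$ as $m\to\infty$, and establish concentration by a second-moment argument before sending $\delta\downarrow 0$. The only (harmless) variation is that you apply Chebyshev to the independent geometrics $\overline{X}_i,\underline{X}_i$ and transfer through the sandwich, whereas the paper bounds the variance of $\frac{1}{n}\sum_i Y_i(T(n))^\alpha 1_{E_{n,m}}$ directly, splitting it into three pieces and estimating the covariance part by the difference of the upper and lower sandwich products.
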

\begin{proof}
We aim to use the second moment method. We first show that 
\begin{equation}
\label{jetzt2}
\lim_{m \rightarrow \infty}  \lim_{n \rightarrow \infty} \frac{1}{n} \sum_{i=1}^n \mathbb{E} \left[  \vert Y_i(T(n))^{\alpha} \cdot 1_{E_{n,m}} \right]  = c(\alpha,p).
\end{equation}
Recall that by inequality (\ref{jetzt}), in the proof of Theorem \ref{huiii}, we have for all $ 2 \leq k  $:
\begin{equation*} \frac{1}{n^{\alpha p}}\mathbb{E}\left[ \vert c_{k,n} \vert^{\alpha} \right] = \frac{1}{n^{\alpha p}}(1-p)\mathbb{E}\left[ \vert c_{k,n}^{\prime} \vert^{\alpha} \right] \leq (1-p) \frac{3c_0+i_0^{\alpha}}{k^{\alpha p}} ,
\end{equation*} 
where $ i_0 $ and $ c_0 $ are constants and thus 
\[ \frac{1}{n} \sum_{i=1}^{\lfloor \delta n \rfloor} \mathbb{E}[Y_i(T(n))^\alpha \cdot 1_{E_{n,m}}]\leq \frac{1}{n}\mathbb{E}[\vert c_{1,n} \vert^\alpha]+ \frac{1-p}{n^{1-\alpha p}} (3c_0+ i_0^\alpha) \sum_{i=2}^{\lfloor \delta n \rfloor} \frac{1}{i^{\alpha p}}.\]
 We recognize the partial sum of an $ \alpha p  $-series, which can be bounded by 
\[\frac{1-p}{n^{1-\alpha p}} \sum_{i=2}^{\lfloor \delta n \rfloor} \frac{3c_0 + i_0^\alpha}{i^{\alpha p}} \leq \frac{1-p}{n^{1-\alpha p}} \cdot 
 (3c_0  + i_0^\alpha )\left(1 + \frac{(n \delta)^{1-\alpha p}-1}{1-\alpha p}\right), \]
as can be found in Chlebus  \cite{harmonic}, and we arrive at:
\[\limsup_{n \rightarrow \infty} \frac{1}{n} \sum_{i=1}^{\lceil \delta n \rceil} \mathbb{E}[Y_i(T(n))^\alpha \cdot 1_{E_{n,m}}] \leq ( 3c_0 + i_0^\alpha )(1-p) \cdot \frac{\delta^{1-\alpha p}}{1-\alpha p}.\]
Now let  $ f_\varepsilon(x):=f \left( \frac{x}{1+\varepsilon} \right) $, and note that $ f_\varepsilon $ is  Riemann integrable on the interval $ (0,1) $. We  then have 
\begin{eqnarray*}
\frac{1}{n}  \sum_{i= \lfloor \delta n \rfloor + 1}^n \mathbb{E}[\overline{X}_i(n , \varepsilon_m)^\alpha]= \frac{1}{n} \sum_{i=\lfloor \delta n \rfloor }^{\lfloor n (1-p) \rfloor} f_{\varepsilon_m}\left(\frac{i}{n}\right) ,
\end{eqnarray*}
and thus by Lemma \ref{sequuence}
\[ \limsup_{n \rightarrow \infty} \frac{1}{n} \sum_{i=\lfloor \delta n \rfloor +1}^n \mathbb{E}[ Y_i(T(n))^{\alpha} \cdot 1_{E_{n,m}} ] \leq  \int_\delta^{1-p} f_{\varepsilon_m}(x) dx. \]
In the same spirit one can show that
\[\liminf_{n \rightarrow \infty} \sum_{i=\lfloor \delta n \rfloor +1 }^n \mathbb{E}[Y_i(T(n))^{\alpha} \cdot 1_{E_{n,m}}] \geq  \int_\delta^{1-p} g_{\varepsilon_m}(x) dx,\]
where  $g_\varepsilon(x):=f\left(\frac{x}{1-\varepsilon} \right) $.
Letting $ \delta $ tend to zero we derive that 
\begin{align*} 
\int_0^{1-p} g_{\varepsilon_m}(x) dx &\leq  \liminf_{n \rightarrow \infty} \frac{1}{n} \sum_{i=1}^n \mathbb{E}[ Y_i(T(n))^\alpha \cdot 1_{E_{n,m}}] \\ & \leq  \limsup_{n \rightarrow \infty} \frac{1}{n} \sum_{i=1}^n \mathbb{E}[ Y_i(T(n))^\alpha \cdot 1_{E_{n,m}}] \leq  \int_0^{1-p} f_{\varepsilon_m}(x) dx,
\end{align*}
and (\ref{jetzt2}) follows by letting $ m $ tend to infinity, by monotone convergence. We are thus left to show that
\begin{equation}
\label{variance}
\lim_{m \rightarrow \infty} \lim_{n \rightarrow \infty} \textup{Var}\left( \frac{1}{n} \sum_{i=1}^n Y_i(T(n))^ {\alpha} \cdot 1_{E_{n,m}} \right) =0.
\end{equation} We split $ \textup{Var}\left( \frac{1}{n} \sum_{i=1}^n Y_i(T(n))^{\alpha } \cdot 1_{E_{n,m}} \right)  $ in three parts
\begin{eqnarray*}
V_{n,m}^1 & = & \frac{1}{n^2} \sum_{i=1}^n \mathbb{E}[Y_i(T(n))^{2 \alpha} \cdot 1_{E_{n,m}}], \\
 V_{n,m}^2 & = & \frac{2}{n^2}  \sum_{i \neq j}^n \textup{Cov}( 1_{E_{n,m}}  Y_i(T(n))^\alpha , 1_{E_{n,m}}   Y_j(T(n))^\alpha  ), \\
 V_{n,m}^3 & = & \frac{1}{n^2}  \sum_{i=1}^n \mathbb{E}[Y_i(T(n))^{ \alpha  } \cdot 1_{E_{n.m}}]^2,
\end{eqnarray*}
and  show separately that all of them converge to zero as $ n $ tends to infinity. Since $ 2 \alpha < 4 $, we have
\begin{eqnarray*}
\lim_{m \rightarrow \infty} \lim_{n \rightarrow \infty} V_{n,m}^1 & \leq &\lim_{m \rightarrow \infty} \lim_{n \rightarrow \infty} \frac{1}{n^2} \sum_{i=1}^n \mathbb{E}[\overline{X}_i(n, \varepsilon_m )^{2 \alpha}  ] \\ & \leq & \lim_{m \rightarrow \infty} \lim_{n \rightarrow \infty}\frac{1}{n^2} \sum_{i=1}^n \mathbb{E}[\overline{X}_i(n, \varepsilon_m)^{4}  ]^{\frac{2 \alpha}{4}}.
\end{eqnarray*}
The fourth moment of a $ \textup{Geom}(q) $ random variable is given by $  1 + 24\frac{(1-q)^4}{q^4} + 60 \frac{(1-q)^3}{q^3}+ 50 \frac{(1-q)^2}{q^2}+15\frac{1-q}{q} $, which is smaller than $ 150 \frac{1}{q^4} $. Thus
\begin{eqnarray*}\lim_{m \rightarrow \infty} \lim_{n \rightarrow \infty} V_{n,m}^1 & \leq & \lim_{m \rightarrow \infty} \lim_{n \rightarrow \infty}\frac{1}{n^2}\sum_{i=1}^{n} \left(150\left(\frac{n(1-p)(1+\varepsilon_m)}{i}\right)^{4p}\right)^{\frac{2 \alpha}{4}} \\ & \leq &\lim_{m \rightarrow \infty} \lim_{n \rightarrow \infty} c_m \cdot \frac{1}{n^{2-2 \alpha p}} \sum_{i=1}^n \left( \frac{1}{i} \right)^{2 \alpha p}, \end{eqnarray*}
where $ c_m:=150^{\frac{\alpha}{2}}((1-p)(1+ \varepsilon_m))^{2 \alpha p} $. Recall that $ 2 > 2p\alpha $ since $ p < \frac{1}{\alpha} $. Now if $ 2 \alpha p>1 $ the series converges and if $ 2 \alpha p =1 $ the series is the harmonic series, whose partial sum grows logarithmically, and thus $ V_{n,m}^1 $ tends to zero as $ n $ and $ m $ tend to infinity. If on the other hand $ 2 \alpha p <1 $ , the series is the $ 2 \alpha p $-harmonic series, which can be bounded by:
\begin{eqnarray*}
\lim_{m \rightarrow \infty} \lim_{n \rightarrow \infty} V_{n,m}^1 &\leq &\lim_{m \rightarrow \infty} \lim_{n \rightarrow \infty} c_m \cdot  \frac{1}{n^{2-2 \alpha p}} \left( 1 + \frac{n^{1-2p \alpha} -1}{1-2p \alpha} \right)=0 ,
\end{eqnarray*}
see Chlebus \cite{harmonic}. Since  $ V^3_{n,m} \leq V^1_{n,m} $  we conclude that $ V^3_{n,m} $ tends to zero as $ n $ and $ m $ tend to infinity. Last we have
\begin{align*} 
\lim_{m \rightarrow \infty} \lim_{n \rightarrow \infty} V^2_{n,m}
\leq  \lim_{m \rightarrow \infty} \lim_{n \rightarrow \infty} \frac{2}{n^2}  \sum_{i \neq j} & \bigg( \mathbb{E}[\overline{X}_i(n,\varepsilon_m)^{\alpha}] \mathbb{E}[\overline{X}_j(n, \varepsilon_m)^{\alpha}] \\ &- \mathbb{E}[\underline{X}_i(n, \varepsilon_m)^{\alpha}] \mathbb{E}[\underline{X}_j(n,\varepsilon_m)^{\alpha}] \bigg)=0,
\end{align*}
and we have shown Equation (\ref{variance}). 
\end{proof}
\subsection{ Critical case $ \alpha p = 1 $}
In this section we aim to show the following result:
\begin{theorem} Let $ p<1 $, let $ \alpha p = 1 $ and let $ t \in \mathbb{R}_+ $. We then have the convergence in distribution \[ \lim_{n \rightarrow \infty}  \left(\frac{1}{n^t\cdot \log(n)}\right)^{\frac{1}{\alpha}} S_{\lfloor n^t \rfloor} =t^{\frac{1}{\alpha}} \cdot \tilde{ S},\] where $ \tilde{S} $ is a $ d $-dimensional isotropic $ \alpha $-stable distributed random variable with zero shift and scale parameter $ ((1-p)\Gamma(\alpha+1))^{\frac{1}{\alpha}} $, that is
\[\mathbb{E}\left[e^{i \langle \theta, \tilde{S} \rangle} \ \right]= \exp\left(-((1-p)\Gamma(\alpha+1)) \cdot \Vert \theta \Vert^\alpha\right), \quad \theta \in \mathbb{R}^d.\]
\end{theorem}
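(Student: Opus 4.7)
As in the proofs of Theorems \ref{huiii} and \ref{second case}, I would condition on $\mathcal{F}_\infty$ and use that, given the cluster sizes, $S_{\lfloor n^t\rfloor}$ is a sum of independent symmetric stable vectors, so
\[
\mathbb{E}\!\left[\exp\!\left(i\left\langle\theta,(n^t\log n)^{-1/\alpha}S_{\lfloor n^t\rfloor}\right\rangle\right)\,\big\vert\,\mathcal{F}_\infty\right]=\exp\!\left(-\|\theta\|^\alpha\,\frac{1}{n^t\log n}\sum_{k=1}^{\lfloor n^t\rfloor}|c_{k,\lfloor n^t\rfloor}|^\alpha\right).
\]
Writing $N:=\lfloor n^t\rfloor$ and using $n^t\log n=(1+o(1))N\log N/t$, bounded convergence reduces the theorem to the convergence in probability
\[
V_N\;:=\;\frac{1}{N\log N}\sum_{k=1}^{N}|c_{k,N}|^\alpha\;\longrightarrow\;(1-p)\Gamma(\alpha+1).
\]

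\textbf{Decomposition.} I pick a slowly growing threshold $x_N$ with $x_N\to\infty$ and $\log x_N=o(\log N)$ (for instance $x_N=\lfloor\sqrt{\log N}\rfloor$), and split $V_N=A_N+B_N$, where $A_N$ gathers the terms $k<x_N$ and $B_N$ the terms $k\ge x_N$. The uniform moment bound (\ref{jetzt}), which at $\alpha p=1$ reads $\mathbb{E}[|c_{k,N}|^\alpha]\le CN/k$ for $k\ge 2$, together with $\mathbb{E}[|c_{1,N}|^\alpha]=O(N)$, yields $\mathbb{E}[A_N]=O(H_{x_N}/\log N)=O(\log x_N/\log N)\to 0$; since $A_N\ge 0$, this gives $A_N\to 0$ in $L^1$ and in probability.

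\textbf{Control of $B_N$.} I apply Lemma \ref{sequuence} with this $x_N$: on the event $E_{N,m}$ (of probability tending to $1$ as first $N\to\infty$, then $m\to\infty$), there are independent geometric bounds $\underline X_k(N,\varepsilon_m)\le Y_k(T(N))\le\overline X_k(N,\varepsilon_m)$ for all $k\in\{x_N,\dots,N\}$, with parameters $r_k\asymp(k/N)^p$ that tend to zero for $k=o(N)$. A $\mathrm{Geom}(r)$ variable satisfies $\mathbb{E}[\mathrm{Geom}(r)^\alpha]\sim\Gamma(\alpha+1)r^{-\alpha}$ as $r\to 0$ (since $rG\to\mathrm{Exp}(1)$ with uniform integrability on any interval $r\in[0,\rho]$, $\rho<1$), so combining with $\alpha p=1$ gives $\mathbb{E}[\overline X_k(N,\varepsilon_m)^\alpha]\sim(1+\varepsilon_m)(1-p)\Gamma(\alpha+1)N/k$ uniformly for $k\in[x_N,(1-p)N]$. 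Summing with the harmonic asymptotic $\sum_{k=x_N}^{(1-p)N}1/k=\log N-\log x_N+O(1)$ and dividing by $N\log N$, the expectation of $B_N^+:=(N\log N)^{-1}\sum_{k\ge x_N}\overline X_k^\alpha$ tends to $(1+\varepsilon_m)(1-p)\Gamma(\alpha+1)$, and analogously $\mathbb{E}[B_N^-]\to(1-\varepsilon_m)(1-p)\Gamma(\alpha+1)$. Since $B_N^{\pm}$ are sums of independent variables and $2\alpha p=2$, their variance is controlled by $(N\log N)^{-2}\sum_{k\ge x_N}\mathbb{E}[\overline X_k^{2\alpha}]\le(N\log N)^{-2}\sum_{k\ge x_N}C(N/k)^2=O(1/\log^2 N)\to 0$. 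Chebyshev gives $B_N^{\pm}\to(1\pm\varepsilon_m)(1-p)\Gamma(\alpha+1)$ in probability; the sandwich $B_N^-\le B_N\le B_N^+$ on $E_{N,m}$ then forces $B_N\to(1-p)\Gamma(\alpha+1)$ in probability after letting $m\to\infty$.

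\textbf{Main obstacle.} The key difference with the subcritical setting is that the would-be density $(1-p)\Gamma(\alpha+1)/x$ replacing the function $f$ of Theorem \ref{second case} is no longer integrable near $x=0$: the limit comes from a logarithmic blow-up rather than a finite Riemann integral, and the truncation $x_N$, the accuracy parameter $\varepsilon_m$ and the variance of the coupled sums must be balanced so that the errors $\log x_N/\log N$, $\varepsilon_m$ and $1/\log^2 N$ all vanish simultaneously in the correct order ($N\to\infty$ before $m\to\infty$). The most delicate technical step is establishing the uniform asymptotic $\mathbb{E}[\mathrm{Geom}(r)^\alpha]=\Gamma(\alpha+1)r^{-\alpha}(1+o(1))$ as $r\to 0$ uniformly over the full range of $r=r_k$ used in the harmonic-type sum for the mean of $B_N^{\pm}$.
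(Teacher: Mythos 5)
Your proposal is correct and follows essentially the same route as the paper's proof: reduce via the conditional characteristic function to the convergence in probability of $(N\log N)^{-1}\sum_k \vert c_{k,N}\vert^\alpha$, discard the low-index terms using the moment bound $\mathbb{E}[\vert c_{k,N}\vert^\alpha]\le C\,N/k$, sandwich the remaining terms between the independent geometric variables of Lemma \ref{sequuence}, and conclude by a first/second moment argument in which the harmonic sum produces the factor $\log N$. The remaining differences are cosmetic (truncation at $\sqrt{\log N}$ instead of $n^\delta$ with $\delta\to 0$; deriving the mean from $\mathbb{E}[\textup{Geom}(r)^\alpha]\sim\Gamma(\alpha+1)r^{-\alpha}$ rather than from the Beta-binomial representation of $\vert c^{\prime}_{i,n}\vert$; computing the variance of the independent bounding sums $B_N^{\pm}$ rather than of the correlated cluster sums), and your one overstatement --- uniformity of the geometric asymptotics all the way up to $k\asymp N$ --- is harmless, since the range $k\ge\rho N$ contributes only $O(N)=o(N\log N)$.
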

Recall that $ Y_i(t) $ denotes the population size of individuals of type $ i \in \mathbb{N} $ in our Yule process with mutation probability $ 1-p=1-\frac{1}{\alpha} $, and that for  $T(n) := \inf\lbrace t \in \mathbb{R} : Y(t) = n \rbrace $, we  have  

\[ \vert c_{i,n} \vert \overset{d}{=} Y_i(T(n))  \quad  \textnormal{for all} \quad  n \in \mathbb{N}. \]
By the same reasoning as in the subcritical case it will suffice to show the following statement.
\begin{lemma}
\label{critical one dimensional}
Let $ 0<\delta < 1 $, and let the sequence of events $ G_{n,m}:=E_m(\lfloor n^\delta \rfloor,n) $ be defined  as in Lemma \ref{sequuence}. We have the convergence in probability
\[\lim_{m \rightarrow \infty} \lim_{n \rightarrow \infty} \frac{1}{n\log(n)} \sum_{i=1}^n Y_i(T(n))^\alpha \cdot 1_{G_{n,m}} = (1-p) \Gamma(1+\alpha).
\]
\end{lemma}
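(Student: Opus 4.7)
The plan is to follow the second moment method used in Lemma \ref{expectations second moment}: show that (i) the expectation of the normalized sum converges to $(1-p)\Gamma(1+\alpha)$ and (ii) its variance vanishes. The new ingredient is the critical scaling $\alpha p = 1$, which replaces the polynomially divergent Riemann sum of the subcritical regime by a logarithmically divergent one, accounting for the $\log(n)$ normalization. The whole argument is parallel to the proof of Lemma \ref{expectations second moment}, so I only describe the places where the critical exponent modifies the estimates.

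For the expectation, I would split the sum at $i = \lfloor n^\delta \rfloor$. On the initial block $1 \leq i \leq n^\delta$, inequality (\ref{jetzt}) specialized to $\alpha p = 1$ gives $\mathbb{E}[|c_{i,n}|^\alpha] \leq (1-p)(3c_0+i_0^\alpha)\, n/i$, and summing bounds this block's contribution by $O(\delta)$ after dividing by $n\log(n)$. On the tail $i > n^\delta$, Lemma \ref{sequuence} sandwiches $Y_i(T(n))$ between the independent geometric variables $\underline{X}_i(n,\varepsilon_m)$ and $\overline{X}_i(n,\varepsilon_m)$ on $G_{n,m}$, and the elementary asymptotic $\mathbb{E}[\textup{Geom}(q)^\alpha] = \Gamma(\alpha+1)\,q^{-\alpha}(1+o(1))$ as $q \to 0$ gives at $\alpha p = 1$
\[\mathbb{E}[\overline{X}_i(n,\varepsilon_m)^\alpha] = \Gamma(\alpha+1)(1-p)(1+\varepsilon_m)\,\frac{n}{i-1}\,(1+o(1)).\]
Using $\sum_{i=n^\delta+1}^{cn}1/i = (1-\delta)\log(n) + O(1)$ with $c=(1-p)(1+\varepsilon_m)$, a direct Riemann sum yields
\[\frac{1}{n\log(n)}\sum_{i=n^\delta+1}^{n(1-p)(1+\varepsilon_m)}\mathbb{E}[\overline{X}_i(n,\varepsilon_m)^\alpha] \longrightarrow (1-\delta)(1-p)\Gamma(\alpha+1)(1+\varepsilon_m),\]
and analogously the lower bound via $\underline{X}_i$ gives $(1-\delta)(1-p)\Gamma(\alpha+1)(1-\varepsilon_m)$. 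Letting $m\to\infty$ and then $\delta \to 0$ (in the spirit of the final step of Lemma \ref{expectations second moment}) produces the claimed value $(1-p)\Gamma(\alpha+1)$.

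For the variance, decompose it as $V_{n,m}^1 + V_{n,m}^2 + V_{n,m}^3$ exactly as in Lemma \ref{expectations second moment}, with $n^2$ replaced by $(n\log(n))^2$. The key observation at $\alpha p = 1$ is that $\mathbb{E}[\overline{X}_i(n,\varepsilon_m)^{2\alpha}]$ is of order $(n/i)^{2\alpha p} = (n/i)^2$ (since the $2\alpha$-th moment of $\textup{Geom}(q)$ is bounded by a constant times $q^{-2\alpha}$), and the analogue of inequality (\ref{jetzt}) applied to the $2\alpha$-th moment yields the same bound for the small-index block. Therefore
\[V_{n,m}^1 \leq \frac{C}{(n\log(n))^2}\sum_{i=1}^n \frac{n^2}{i^2} = \frac{C}{\log(n)^2}\sum_{i=1}^\infty \frac{1}{i^2} \longrightarrow 0,\]
and the bound $V_{n,m}^3 \leq V_{n,m}^1$ handles the third term. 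For $V_{n,m}^2$, the independence of $\overline{X}_i$ and $\overline{X}_j$ reduces each covariance to $\mathbb{E}[\overline{X}_i^\alpha]\mathbb{E}[\overline{X}_j^\alpha] - \mathbb{E}[\underline{X}_i^\alpha]\mathbb{E}[\underline{X}_j^\alpha]$; each factor is of order $n/i$, the double harmonic sum $\sum_{i\neq j}1/(ij)$ is of order $\log(n)^2$, and the difference of the two product-of-expectations carries an overall factor $O(\varepsilon_m)$, so $V_{n,m}^2 = O(\varepsilon_m)\to 0$ on sending $m\to\infty$.

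The main obstacle is the delicacy of the critical scaling: since the harmonic divergence is only logarithmic, all error terms in the Riemann sum approximation and in the geometric-moment asymptotic must be controlled strictly below order $\log(n)$. In particular, the expansion $\mathbb{E}[\textup{Geom}(q)^\alpha] = \Gamma(\alpha+1)q^{-\alpha}(1+o(1))$ requires a uniform remainder estimate over the summation range $i \in (n^\delta, n(1-p)(1+\varepsilon_m)]$, which can be obtained from monotonicity of the explicit series defining $f$ in the subcritical section. Likewise, the $O(\delta)$ contribution of the block $i \leq n^\delta$ and the factor $(1-\delta)$ in the tail combine, via the $\delta \to 0$ step, to recover the full constant $(1-p)\Gamma(\alpha+1)$.
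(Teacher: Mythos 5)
Your proposal is correct and follows the paper's own second moment method: the same splitting of the sum at $\lfloor n^\delta\rfloor$, the same treatment of the initial block via inequality (\ref{jetzt}), and the identical variance decomposition into $V^1_{n,m},V^2_{n,m},V^3_{n,m}$ with the same bounds. The one substantive difference is how the main term of the expectation is evaluated on the range $i>n^\delta$. You stay entirely inside the Yule-process picture and apply the geometric sandwich of Lemma \ref{sequuence} together with the asymptotic $\mathbb{E}[\textup{Geom}(q)^\alpha]\sim\Gamma(\alpha+1)q^{-\alpha}$, i.e.\ you push the Riemann-sum computation of Lemma \ref{expectations second moment} into the critical regime where it diverges logarithmically. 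The paper instead abandons the geometric variables for this particular step: it bounds $\mathbb{E}[Y_i(T(n))^\alpha 1_{G_{n,m}}]\le\mathbb{E}[\vert c_{i,n}\vert^\alpha]$ and evaluates the latter through the Beta-Binomial representation (\ref{subtree rooted}) and the root-cluster moment asymptotics (\ref{limit expectation2}), obtaining $\mathbb{E}[\vert c_{i,n}\vert^\alpha]\le(1-p)\bigl(k_0^{\alpha+1}+\Gamma(\alpha+1)(1+\varepsilon)\,n/i\bigr)$, so that the error is controlled by $\mathbb{E}[Y(n,i)]$ rather than by a remainder estimate in the geometric asymptotic. The paper's route avoids any uniformity discussion for the upper bound; yours keeps the critical proof structurally parallel to the subcritical one. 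Two points you should make explicit for your argument to close. First, your expansion $\mathbb{E}[\textup{Geom}(q)^\alpha]=\Gamma(\alpha+1)q^{-\alpha}(1+o(1))$ is not uniform where $q$ stays bounded away from $0$, i.e.\ for $i$ of order $n$; this is harmless because those $O(n)$ terms each contribute $O(1)$ and hence $O(1/\log n)$ after normalization, but it must be said. Second, for the lower bound you need to pass from $\mathbb{E}[\underline{X}_i(n,\varepsilon_m)^\alpha]$ to $\mathbb{E}[\underline{X}_i(n,\varepsilon_m)^\alpha 1_{G_{n,m}}]$, which requires showing that the contribution of $\Omega\setminus G_{n,m}$ is $o(n\log n)$; this is exactly the content of the paper's Equation (\ref{probab}), proved by Cauchy--Schwarz against the $2\alpha$-th moments you already control in the variance step, and it is absent from your sketch.
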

\begin{proof}
If $ \alpha = 2 $ the results follows by a direct computation using the first and second moment of the geometric random variable in Lemma  \ref{sequuence}. Let us hence assume that $ \alpha < 2 $. As in the proof of Lemma \ref{expectations second moment} we shall  use the second moment method. First, we aim to show that
\begin{equation}
\label{ttt}
\lim_{m \rightarrow \infty}  \lim_{n \rightarrow \infty}  \frac{1}{n\log(n)} \sum_{i=1}^n \mathbb{E} \left[ Y_i(T(n))^\alpha \cdot 1_{G_{n,m}} \right] = (1-p) \Gamma(1+\alpha).
\end{equation}
For the upper bound  note that   \[ \mathbb{E} \left[ Y_i(T(n))^\alpha \cdot 1_{G_{n,m}} \right]  \leq \mathbb{E}\left[ \vert c_{i,n} \vert^\alpha \right]
,  \] and using the same bounds as in  inequality (\ref{jetzt}), and keeping in mind that $ \alpha p=1 $, we have 
\begin{eqnarray}
\label{W}
\frac{1}{n\log(n)} \sum_{i=1}^{\lfloor n^\delta \rfloor} \mathbb{E}\left[ \vert c_{i,n} \vert^\alpha \right] & \leq &  (3c_0+i_0^\alpha) \cdot \frac{1}{\log(n)} \sum_{i=1}^{\lfloor n^\delta \rfloor} \frac{1}{i},
\end{eqnarray}
where $ i_0 $ and $ c_0 $ are constants, and thus
\begin{equation} 
\label{1} 
\limsup_{n \rightarrow \infty}  \frac{1}{n\log(n)} \sum_{i=1}^{\lfloor n^\delta \rfloor} \mathbb{E}\left[ \vert c_{i,n} \vert^\alpha  \right] \leq (3c_0+i_0^\alpha)\delta. 
\end{equation}
Now recall  that $ \vert c_{k,n}^ {\prime} \vert  $ denotes the size of the subtree rooted at node $ k $, and that it is a cluster if the edge connecting $ k $ to its parent is deleted.  We have seen in Equation (\ref{subtree rooted}) that   \[ \vert c_{k,n}^\prime \vert  \overset{d}{=} \vert c_{1,Y(n,k)} \vert ,\] where $Y(n,k)  $ is Beta-Binomial distributed  with parameter $ (n-k,1,k-1) $ and further independent of $ \vert c_{1,i} \vert $ for all $ i \in \mathbb{N} $. Thus
\begin{eqnarray*}
\sum_{i=\lfloor n^\delta \rfloor +1}^{n} \mathbb{E}\left[ \vert c_{i,n} \vert^\alpha  \right] & = &  (1-p) \sum_{i=\lfloor n^\delta \rfloor +1}^{n} \mathbb{E}\left[ \vert c^\prime_{i,n} \vert^\alpha  \right] \\ & = & (1-p)\sum_{i=\lfloor n^\delta \rfloor+1}^{n} \sum_{k=1}^{n-i} \mathbb{E}\left[ \vert c_{1,k} \vert^\alpha  \right] \mathbb{P}(Y(n,i)=k).
\end{eqnarray*}
Let $ \varepsilon > 0 $, by Equation (\ref{limit expectation2}), there exists an integer number $ k_0 $ such that for all $ k \geq k_0 $:
\[\mathbb{E}\left[ \vert c_{1,k} \vert^\alpha  \right] \leq k \cdot \mathbb{E}\left[X_1^\alpha\right](1+\varepsilon) = k \cdot \Gamma(\alpha + 1)(1+\varepsilon), \]
and we thus have the inequality
\begin{eqnarray*}
 \sum_{k=k_0}^{n-i} \mathbb{E}\left[ \vert c_{1,k} \vert^\alpha  \right] \mathbb{P}(Y(n,i)=k) & \leq & \Gamma(\alpha + 1)(1+\varepsilon)  \sum_{k=k_0}^{n-i} k \cdot  \mathbb{P}(Y(n,i)=k) \\  & \leq &\Gamma(\alpha + 1) (1+\varepsilon) \cdot \mathbb{E}\left[Y(n,i)\right] \\ & \leq &\Gamma(\alpha + 1) (1+\varepsilon)  \cdot n \cdot \frac{1}{i}.
\end{eqnarray*}
Using the elementary bound
\begin{eqnarray*}
\sum_{i=\lfloor n^\delta \rfloor +1}^{n} \sum_{k=1}^{k_0-1} \mathbb{E}\left[ \vert c_{1,k} \vert^\alpha  \right] \mathbb{P}(Y(n,i)=k) \leq  n  k_0 \cdot k_0^\alpha,
\end{eqnarray*}
we thus have 
\begin{eqnarray*}
\sum_{i=\lfloor n^\delta \rfloor +1}^{n} \mathbb{E}\left[ \vert c_{i,n} \vert^\alpha  \right] \leq (1-p) \left( n  k_0^{\alpha +1} + n  \Gamma(\alpha + 1)(1+\varepsilon) \sum_{i=\lfloor n^\delta \rfloor +1}^{n} \frac{1}{i} \right).
\end{eqnarray*}
Moreover
\begin{equation} \label{K} \limsup_{n \rightarrow \infty}  \frac{1}{n \log(n)} \sum_{i=\lfloor n^\delta \rfloor +1}^{n} \mathbb{E}\left[ \vert c_{i,n} \vert^\alpha  \right]   \leq \Gamma(\alpha + 1) (1+\varepsilon) (1-\delta),
\end{equation}
and combining Inequality (\ref{1}) and Inequality (\ref{K}) and letting $ \delta $ and $ \varepsilon $ tend to zero we conclude that
\[\limsup_{n \rightarrow \infty}  \frac{1}{n \log(n)} \sum_{i=1}^{n}  \mathbb{E}\left[ \vert c_{i,n} \vert^\alpha  \right]   \leq \Gamma(\alpha + 1) (1-p) .\]
For the lower bound we first aim to show that
\begin{equation} 
\label{probab}
\lim_{m \rightarrow \infty} \limsup_{n\rightarrow \infty} \frac{1}{n \log(n)}\sum_{i=1}^n \mathbb{E}\left[ Y_i(T(n))^\alpha 1_{\lbrace \Omega \backslash G{n,m}\rbrace}\right]=0. 
\end{equation}
Indeed, by the Cauchy-Schwarz inequality we have
\begin{align*}
\mathbb{E} & \left[ Y_i(T(n))^\alpha 1_{\lbrace \Omega \backslash G{n,m}\rbrace }\right] \\ &\leq  \left( \mathbb{E}\left[ \vert c_{i,n} \vert^{2 }\right] \right)^{\alpha /2} \cdot  \left(\mathbb{P}(\Omega \backslash G_{n,m})\right)^{\alpha / 2}.  
\end{align*}
By the same reasoning as before there exists an integer number $ k_0 $ such that
\begin{align*}
\frac{1}{n}\mathbb{E}\left[ \vert c_{i,n} \vert^{2 }\right] \leq   k_0^{3}+ \Gamma(3) (1+\varepsilon)  \mathbb{E}[Y(n,i)^2], 
\end{align*}
and since 
\[  \mathbb{E}[Y(n,i)^2] =\frac{(n-i)(2(n-i)+i-1))}{i(i+1)}  \leq  \frac{3n^2}{i^2}\]
Equation \ref{probab} follows since $ \mathbb{P}(\Omega \backslash G_{n,m}) \rightarrow 0 $ as $ n $ and $ m $ tend to infinity. The lower bound can now be shown in the same spirit as the upper bound. We are left to show that
\begin{equation}
\label{Varri}
\lim_{m \rightarrow \infty} \lim_{n \rightarrow \infty}\textup{Var}\left(  \frac{1}{n\log(n)} \sum_{i=1}^n  Y_i(T(n))^\alpha \cdot 1_{G_{n,m}} \right)=0.
\end{equation}
We split the variance in three parts
\begin{eqnarray*}
V_{n,m}^1 & = & \frac{1}{(n \log(n))^2} \sum_{i=1}^n \mathbb{E}[ Y_i(T(n))^{2 \alpha} \cdot 1_{G_{n,m}}], \\
 V_{n,m}^2 & = & \frac{2}{(n\log(n))^2}  \sum_{i \neq j}^n \textup{Cov}( 1_{G_{n,m}} Y_i(T(n))^\alpha , 1_{G_{n,m}} Y_j(T(n))^\alpha  ), \\
 V_{n,m}^3 & = & \frac{1}{(n\log(n))^2}  \sum_{i=1}^n \mathbb{E}[Y_i(T(n))^{ \alpha  } \cdot 1_{G_{n,m}}]^2.
\end{eqnarray*}
By the same computations as in the proof of Lemma \ref{expectations second moment} we have  
\begin{eqnarray*}
\lim_{m \rightarrow \infty} \lim_{n \rightarrow \infty}  V_{n,m}^1 \leq \lim_{m \rightarrow \infty} \lim_{n \rightarrow \infty} c_m\cdot \frac{1}{\log(n)^ 2}\sum_{i=1}^ n \frac{1}{i^ 2}=0,
\end{eqnarray*}
where $ c_m=150^{\frac{\alpha}{2}}((1-p)(1+ \varepsilon_m))^{2} $ and hence $ V_{n,m}^3 $ also tends to zero as $ n $ and $ m $ tend to infinity. Last
\begin{align*}
\lim_{m \rightarrow \infty} \lim_{n \rightarrow \infty}  V_{n,m}^2 \leq \lim_{m \rightarrow \infty} \lim_{n \rightarrow \infty} \bigg(  & \frac{2}{(n\log(n))^2}  \sum_{i \neq j} \bigg( \mathbb{E}[\overline{X}_i(n,\varepsilon_m)^{\alpha}] \mathbb{E}[\overline{X}_j(n, \varepsilon_m)^{\alpha}]  \\  &- \mathbb{E}[\underline{X}_i(n, \varepsilon_m)^{\alpha}]\mathbb{E}[\underline{X}_j(n,\varepsilon_m)^{\alpha}] \bigg)\bigg) =0,
\end{align*}
and we find that Equation (\ref{Varri}) holds.
\end{proof}
\section{A characterization of the limiting process of the Shark Random Swim}

In this section we characterize the limiting process of the Shark Random Swim in dimension one, in the critical and subcritical case. Recall that for the Elephant Random Walk the limiting process in the critical and subcritical case is a Gaussian process. We shall see that for  the Shark Random Swim this role is played by $ \alpha $-stable processes, and we thus first give some background on stable processes.
\subsection{Some results on stable processes}
\label{sttabel}
We first give a definition of stable processes.
\begin{definition}
We call the process $ (S(t), t \geq 0) $  a (strictly) stable process, if all its finite-dimensional distributions, that is the distribution of the  vectors 
\[(S(t_1),...,S(t_k)), \quad k \in \mathbb{N}, \quad t_1,...,t_k \in \mathbb{R}_+,\]
are (strictly) stable. 
\end{definition}
It is well-known that for strictly stable processes, this definition is equivalent to the following characterization. For a proof see for example Samorodnitsky and Taqqu \cite{Stable}, Theorem 3.2.1.

\begin{lemma}
The stochastic process $ (S(t), t \geq 0) $ is a strictly stable process, if and only if all linear combinations, 
\[\sum_{i=1}^k a_k S(t_k), \quad t_1,...,t_k \in \mathbb{R}_+, \quad a_1,...,a_k \in \mathbb{R}, \quad k \in \mathbb{N},\]
are strictly stable.

\end{lemma}
We will  be interested in stable integrals, a special class of stable processes, which will enable us to characterize the limiting process of the Shark Random Swim in the subcritical case. Following Samorodnitsky and Taqqu \cite{Stable} Chapter 3, we let   $ (E,\mathcal{E},m) $ denote a measure space and define the linear space
\begin{align*} L^{\alpha}(E,\mathcal{E},m):=\left\lbrace \textnormal{$ h: h $ measurable and $ \int_E \vert h(x) \vert^\alpha m(dx) < \infty $ }\right\rbrace.
\end{align*} By Kolmogorov's existence theorem one then has the following statement.

\begin{lemma}

 There exists a stochastic process \[ (I(h), h \in L^\alpha(E,\mathcal{E},m)) ,\]  whose finite-dimensional distributions $ (I(h_1),...I(h_k)) $, with $ k \in \mathbb{N}  $, have characteristic function
\begin{equation} 
\label{characteristic}
 \exp\left(-\int_E \left\vert \sum_{i=1}^k \theta_i h_i(x) \right\vert^\alpha  m(dx) \right).
 \end{equation} The random variable $ I(h) $ is called the $ \alpha $-stable integral of $ h $ with control measure $ m $ and skewness intensity zero. 
\end{lemma} 
As the name suggests, $ I(h) $ for $ h \in L^\alpha(E,\mathcal{E},m) $, can also be viewed as an integral, and we now aim to define the measure which  plays the role as an integrator, the so called $ \alpha $-stable random measure. Let $ (\Lambda,\mathbb{Q},\mathcal{G}) $ denote the underlying probability space,  $ L^0(\Lambda) $ the set of all real valued random variables defined on it, and define \[ \mathcal{E}_0:=\lbrace A \in \mathcal{E}: m(A) < \infty \rbrace   .\]
The $ \alpha $-stable random measure is the defined as follows.
\begin{definition}
We call the  set function 
\[M: \mathcal{E}_0 \rightarrow L^0(\Lambda)\]
an $ \alpha $-stable random measure with control measure $ m $ and skewness intensity zero, if

\begin{enumerate}

\item it is sigma-additive and independently scattered (i.e. for $ A_1,...,A_n  $ disjoint, $ M(A_1),...M(A_n) $ are independent random variables), 
\item for each $ A \in \mathcal{E}_0 $ the random variable  $ M(A) $ is $ \alpha  $-stable distributed with scale parameter $ m(A)^{\frac{1}{\alpha}} $, skewness parameter zero, and shift parameter zero.
\end{enumerate}
\end{definition}

 There  is  an equivalent definition of the  $ \alpha $-stable integral as the limit in probability  \[ \lim_{n \rightarrow \infty}\int_Eh^n(x)dM(x), \] where $ h^n $ is a sequence of simple functions approximating $ h $, and $ M $ an $ \alpha $-stable random measure with control measure $ m $ and skewness intensity zero, (see  Samorodnitsky and Taqqu \cite{Stable}, Chapter 3.4). To stress the connection with the stable random measure, one also uses the notation

  \[I(h)=\int_Eh(x)dM(x).\] 
For the distribution of $ I(h) $, it has been shown that:
\begin{lemma}

The random variable $ I(h) $, for $ h \in L^{\alpha}(E,\mathcal{E},m) $, is $ \alpha $-stable distributed with scale parameter \[\left(\int_E \vert h(x)\vert^\alpha m(dx) \right)^{\frac{1}{\alpha}},\]
skewness parameter zero, and shift parameter zero. Moreover for $ h_1,...,h_k \in L^{\alpha}(E,\mathcal{E},m) $, the integrals $ I(h_1),..,I(h_k) $ are jointly $ \alpha $-stable distributed with characteristic function given by Equation (\ref{characteristic}).
\end{lemma} 
A proof of this lemma can be found in Samorodnitsky and Taqqu \cite{Stable} Property 3.2.1. In the Gaussian case, $ \alpha =2 $, we will further need to compute the covariance functions of the process, which are given in the following lemma. A proof can be found in Samorodnitsky and Taqqu \cite{Stable}, Proposition 3.5.2.
\begin{lemma}
\label{covariance}
Let $ \alpha = 2 $ and  $ h_1,h_2 \in L^{2}(E,\mathcal{E},m) $, then
\[\textup{Cov}(I(h_1),I(h_2))=2\int_E h_1(x)h_2(x)m(dx).\]
\end{lemma}
Recall that a stable random variable with scale parameter $ c \in (0,\infty)$, skewness parameter zero, and shift parameter zero, is  normal distributed $\mathcal{N}(0, 2c^2) $, which explains the factor $ 2 $.
\subsection{Subcritical case $ \alpha p < 1 $}
In this section we shall study the subcritical case. Recall that the position of the Shark Random Swim at time $ n $, in dimension one, can  be expressed as \[ S_n=\sum_{i} \vert c_{i,n} \vert \xi_i ,\] where   $ \xi_i$, for $ i \in \mathbb{N} $, are i.i.d. one-dimensional  standard isotropic strictly stable random variables with zero shift and stability parameter $ \alpha \in (0,2] $, that is \[\mathbb{E}\left[e^{i  \langle \theta , \xi_1 \rangle} \right]=e^{- \vert \theta \vert^\alpha}\quad \textnormal{for} \quad \theta \in \mathbb{R}.\]   In this section we aim to show that :

\begin{theorem}
\label{theorem 2}
Let $ \alpha p < 1 $ and $ 0 < t_1 \leq t_2 \leq...\leq t_k \in \mathbb{R}_+ $ for $ k \in \mathbb{N} $. We have the distributional convergence 
\[\left( \left( \frac{1}{n}\right)^{\frac{1}{\alpha}}  S_{\lfloor t_1 n \rfloor },...,\left( \frac{1}{n}\right)^{\frac{1}{\alpha}}  S_{\lfloor t_k n \rfloor }\right) \rightarrow (S(t_1),...,S(t_k)), \]
where the vector  $( S(t_1),...,S(t_k)) $ is jointly strictly $ \alpha $-stable distributed.

\end{theorem}
 For the proof of Theorem \ref{theorem 2}, we aim to show that each linear combination  \[ \sum_{j=1}^k a_j  S(t_j), \quad  a_1,..,a_k \in \mathbb{R}\] is strictly $ \alpha $-stable distributed. Recall that this means, that is there exists a constant  $  g(a_1,...,a_k,t_1,...,t_k) $  such that for each $ \theta \in \mathbb{R} $ we have
\begin{equation}
\label{m}
\mathbb{E}\left[\exp \left(i \theta \sum_{j=1}^k a_j   S(t_j) \right) \right]=\exp\left(- \vert \theta  \vert^{\alpha} \cdot g(a_1,...,a_k,t_1,...,t_k) \right)
.\end{equation}

In order to prove Equation (\ref{m}), recall that we defined the filtration $ \mathcal{F}_n: = \sigma(\vert c_{1,n} \vert,...,\vert c_{n,n} \vert) $.  By conditioning we then have
\begin{align}
\label{aa}
\mathbb{E}&\left[\exp\left( i \theta \sum_{j=1}^k  a_j \left( \frac{1}{n}\right)^{\frac{1}{\alpha}}  S_{\lfloor t_j n \rfloor }\right) \bigg\vert \mathcal{F}_{\lfloor t_kn \rfloor} \right] \nonumber \\&= \exp\left(- \vert \theta \vert^\alpha \cdot \frac{1}{n}\sum_{m=1}^\infty \bigg\vert \sum_{j=1}^k a_j \cdot \vert c_{m,\lfloor t_j n \rfloor } \vert 1_{\lbrace m \leq \lfloor t_j n \rfloor  \rbrace } \bigg\vert^\alpha \right),
\end{align}
and it will be enough to control the limiting behavior of (\ref{aa}). Recall   that in Equation (\ref{jetzt}), we established a bound for the cluster sizes, which we will state now as a lemma, as we will use it repeatedly.
\begin{lemma} 
\label{boundibound}
For each $ \alpha \leq 2  $ there exists a constant $ c_1(\alpha,p) $ such that for all $ k \geq 2 $ we have
 \begin{equation*}  \frac{1}{n^{\alpha p}}\mathbb{E}\left[ \vert c_{k,n}^{\prime} \vert^{\alpha} \right] \leq c_1(\alpha,p) \cdot \frac{1}{k^{\alpha p}}.
\end{equation*} 
\end{lemma}
As in Section \ref{erdb}, we denote by $ Y_i(t) $ the population size of individuals of type $ i \in \mathbb{N} $ in a Yule process with mutation rate $ 1-p $. Recall moreover that  \[ b_i:=\inf\lbrace t \geq 0: Y_i(t) >0 \rbrace  \] denotes the birth time of the first individual of type $ i $, and    \[T(n) := \inf\lbrace t \in \mathbb{R} : Y(t) = n \rbrace\] denotes the first time when there are $ n $ individuals alive. The connection with the cluster sizes is then given by the equality in distribution
\begin{equation}
\label{equation1}
(Y_1(T(n)),...,Y_n(T(n))_{n\geq 0}   \overset{d}{=}  (\vert c_{1,n} \vert,...,\vert c_{n,n} \vert)_{n \geq 0} .  \end{equation}

Now let us  define the map $ f:(0, \infty) \times \Omega \rightarrow \mathbb{R}_+ $ with
\begin{align}
\label{functionn}
f(x,\omega):= Y_1(-\log(x)+\log(1-p))(\omega)1_{\lbrace x < (1-p) \rbrace },
\end{align}
and note that $ f $ is measurable since for $ \omega $ fixed, the map $ x \mapsto  f(x,\omega)$ is right-continuous and for fixed $ x $ the map $ \omega \mapsto  f(x,\omega)$ is measurable on $ \Omega $. Moreover
\[\int_{(0,\infty) \times \Omega} \vert f(x,\omega) \vert^\alpha dx \times d\mathbb{P}(\omega) \leq \int_{(0,1)} \left( \frac{1}{y}\right)^{\alpha p} dy < \infty,  \]
by Tonelli's theorem and Lemma \ref{Yule}, since $ \alpha p < 1 $. 

Now 
by the Equality in distribution  (\ref{equation1}), it will be enough to prove the following result.

\begin{proposition}
\label{Prop1}
Let $ k \in \mathbb{N} $,  $ 0 < t_1 \leq t_2 \leq...\leq t_k \in \mathbb{R}_+ $,  $ a_1, a_2,...,a_k \in \mathbb{R} $. There exists a   sequence of events $ E_{n,m} $, with \[ \lim_{m \rightarrow \infty} \lim_{n \rightarrow \infty} \mathbb{P}(E_{n,m})= 1, \] such that  we have the convergence in probability
\begin{align*}
\lim_{m \rightarrow \infty} \lim_{n \rightarrow \infty} \frac{1}{n} \sum_{i=1}^\infty \bigg\vert \sum_{j=1}^k & a_j \vert Y_i(T(\lfloor t_j n \rfloor) \vert 1_{ \lbrace  i \leq  \lfloor t_j n \rfloor\rbrace } \bigg\vert^\alpha 1_{E_{n,m}}  \\ &= \int_{(0,\infty) \times \Omega} \bigg\vert \sum_{j=1}^k a_jf\left(\frac{x}{t_j}, \omega \right)\bigg\vert^\alpha dx \times d\mathbb{P}(\omega).
\end{align*}

\end{proposition}
\begin{proof}
We first construct the sequence of events. Let  \[ E_{m}(\lfloor \delta n t_k \rfloor, \lfloor n t_k \rfloor ) , \quad \textnormal{with} \quad 0 < \delta < \min\left(1, \frac{1}{t_k} \right) \] be defined as in Lemma \ref{sequuence}, and let $ A_{n,m} $ denote the event on which for all $ i \in \lbrace \lfloor \delta n t_k \rfloor,..., \lfloor n t_k \rfloor\rbrace $ the random variables
\begin{align} \label{same sign} \frac{1}{n^p} \sum_{j=1}^k a_j Y_{i}(T(\lfloor t_j n \rfloor)  1_{ \lbrace  i \leq  \lfloor t_j n \rfloor\rbrace } \quad \textnormal{and} \quad \frac{1}{n^p} \sum_{j=1}^k a_j \overline{X}_{i}(\lfloor t_j n \rfloor, \varepsilon_m)  1_{ \lbrace  i \leq  \lfloor t_j n \rfloor\rbrace } 
\end{align}
have the same sign, and note that, by definition, these two random variables have a.s. the same limit as $ m $ and $ n $ tend to infinity.
We then define
\[E_{n,m}:=  E_{m}(\lfloor \delta n t_k \rfloor, \lfloor n t_k \rfloor ) \cap A_{n,m}. \]
For the rest of the proof we  shorthand write:
\begin{equation*}
f_j(x,\omega):= f\left(\frac{x}{t_j}, \omega \right).
\end{equation*}
We shall use the second moment method.  First we show that: 
\begin{align}
\lim_{m \rightarrow \infty} \lim_{n \rightarrow \infty} & \frac{1}{n} \sum_{i=1}^\infty \mathbb{E} \left[ \bigg\vert \sum_{j=1}^k a_j Y_{i}(T(\lfloor t_j n \rfloor) ) 1_{ \lbrace  i \leq  \lfloor t_j n \rfloor\rbrace } \bigg\vert^\alpha 1_{E_{n,m}} \right] \nonumber  \\ &=\int_{(0,\infty) \times \Omega} \bigg\vert \sum_{j=1}^k a_j f_j(x,\omega)\bigg\vert^\alpha dx \times d\mathbb{P}(\omega).
\end{align}
We split the sum into two parts. For the first part note that
\begin{align*}
\sum_{i=1}^{\lfloor \delta t_k n \rfloor} & \mathbb{E} \left[ \bigg\vert \sum_{j=1}^k a_j Y_{i}(T(\lfloor t_j n \rfloor) ) 1_{ \lbrace  i \leq  \lfloor t_j n \rfloor\rbrace } \bigg\vert^\alpha 1_{E_{n,m}} \right]  \\ &\leq \sum_{i=1}^{ \lfloor \delta t_k n  \rfloor }\mathbb{E} \left[ \bigg\vert \sum_{j=1}^k a_j \vert c_{i,\lfloor t_j n \rfloor  } \vert 1_{\lbrace i \leq \lfloor t_j n \rfloor \rbrace}  \bigg\vert^\alpha \right]. 
\end{align*}
Now, letting   $ a:=(\vert a_1 \vert+...+\vert a_k \vert ) $, we have 
\begin{align*}
\frac{1}{n} \sum_{i=1}^{ \lfloor \delta t_k n  \rfloor }\mathbb{E} \left[ \bigg\vert \sum_{j=1}^k a_j \vert c_{i,\lfloor t_j n \rfloor  } \vert 1_{ \lbrace i \leq \lfloor t_j n \rfloor \rbrace }  \bigg\vert^\alpha \right] &\leq \frac{1}{n} \sum_{i=1}^{ \lfloor \delta t_k n \rfloor }\mathbb{E} \left[ \bigg\vert k \cdot a \cdot \vert c_{i,\lfloor t_k n \rfloor  } \vert \bigg\vert^\alpha \right] \\ &\leq (k \cdot a)^\alpha \frac{1}{n} \sum_{i=1}^{ \lfloor \delta t_k n \rfloor }\mathbb{E} \left[  \vert c_{i,\lfloor t_k n \rfloor  } \vert^\alpha \right].
\end{align*}
By Lemma \ref{boundibound}   there exists a constant $ c_1(\alpha,p) $ such that  we have the bound:
\begin{align*}
(k \cdot a)^\alpha \frac{1}{n} \sum_{i=1}^{ \lfloor \delta t_k n \rfloor }\mathbb{E} \left[  \vert c_{i,\lfloor t_k n \rfloor  } \vert^\alpha \right] &\leq (k \cdot a)^\alpha  \cdot \frac{t_k^{\alpha p} c_1(\alpha,p)}{n^{1-\alpha p}} \sum_{i=1}^{\lfloor t_k \delta n \rfloor} \frac{1}{i^{\alpha p}} \\ &\leq  (k \cdot a)^\alpha  \cdot \frac{t_k^{\alpha p} c_1(\alpha,p)}{n^{1-\alpha p}}\left(1+\frac{(t_kn\delta)^{1-\alpha p}-1}{1-\alpha  p}\right),
\end{align*}
where the last inequality is due to Chlebus \cite{harmonic}. We conclude that
\begin{equation}
\label{dellta}
\limsup_{n \rightarrow \infty} \frac{1}{n} \sum_{i=1}^{ \lfloor \delta t_k n \rfloor }\mathbb{E} \left[ \bigg\vert \sum_{j=1}^k a_j \vert c_{i,\lfloor t_j n \rfloor  } \vert \bigg\vert^\alpha \right]  \leq  (k \cdot a)^\alpha  \cdot t_k^{\alpha p} c_1(\alpha,p) \cdot  \frac{(\delta   t_k)^{1- \alpha p} }{1-\alpha p}. 
\end{equation}
For the other part of the sum,  denote by $ \sgn $ the signum function, and   define  for $ j \in \lbrace 1,...,k \rbrace $  and $ i \in \lbrace  \lfloor \delta t_k n \rfloor,..., \lfloor t_k n \rfloor \rbrace $ the event
\[A_{i,j}:=\left\lbrace \omega \in \Omega: \sgn(a_j)=\sgn\left(\sum_{m=1}^k a_m \overline{X}_{i}(\lfloor t_m n \rfloor, \varepsilon_m) (\omega)) 1_{ \lbrace  i \leq  \lfloor t_m n \rfloor\rbrace }  \right) \right\rbrace,\]
and the random variables
\begin{align}
\label{chi}
\overline{\chi}_i(\lfloor t_j n \rfloor, \varepsilon) &:=\overline{X}_i( \lfloor t_j n \rfloor,\varepsilon)1_{A_{i,j}}+\underline{X}_i( \lfloor t_j n \rfloor ,\varepsilon)1_{A_{i,j}^c} \\ \nonumber \underline{\chi}_i( \lfloor t_j n \rfloor ,\varepsilon)&:=\underline{X}_i(  \lfloor t_j n \rfloor,\varepsilon)1_{A_{i,j}}+\overline{X}_i(  \lfloor t_j n \rfloor,\varepsilon)1_{A_{i,j}^c}.
\end{align}
Let $ c_1,...,c_k, d_1,...,d_k \in \mathbb{R} $. Note that if $ \sum_{j=1}^k a_j \cdot c_j > 0 $ then
\[ \sum_{j=1}^k a_j \cdot c_j \leq \sum_{j=1}^k a_j \cdot d_j \]  if for $ j $ s.t. $ a_j >0 $ one has $ d_j > c_j  $ and for $ j $ s.t. $ a_j < 0 $  one has $ d_j < c_j $. The opposite is true if $ \sum_{j=1}^k a_j \cdot c_j < 0 $. Hence on the event $ E_{n,m} $ we have, thanks to Property (\ref{same sign}), the  inequalities
\begin{align} 
\label{bounds}
 \bigg\vert \sum_{j=1}^k a_j \cdot \underline{\chi}_i ( \lfloor t_j n \rfloor ,\varepsilon_m)  1_{ \lbrace  i \leq  \lfloor t_j n \rfloor\rbrace } \bigg\vert^\alpha  &\leq \bigg\vert \sum_{j=1}^k a_j Y_{i}(T(\lfloor t_j n \rfloor) 1_{ \lbrace  i \leq  \lfloor t_j n \rfloor\rbrace } \bigg\vert^\alpha \nonumber \\ &\leq   \bigg\vert \sum_{j=1}^k a_j  \cdot \overline{\chi}_i (\lfloor t_j n \rfloor, \varepsilon_m )  1_{ \lbrace  i \leq  \lfloor t_j n \rfloor\rbrace } \bigg\vert^\alpha.
\end{align}
Now define the functions $ f_j^{\varepsilon_m} : (0,\infty) \times \Omega \rightarrow \mathbb{R}_+$ with
\begin{align*}
f_j^{\varepsilon_m}(x,\omega):=f_j\left(\frac{x}{1+\varepsilon_m},\omega\right)1_{A_{j}(x, \varepsilon_m)}+f_j\left(\frac{x}{1-\varepsilon_m},\omega\right)1_{A_{j}(x, \varepsilon_m)^c},
\end{align*}
where $ A_{j}(x, \varepsilon_m) $, for $ x \in (0,(1-p)(1+\varepsilon_m)) $, is defined as the event on which 
\begin{equation*} \sgn(a_j)=\sgn \left(\sum_{m=1}^k a_m Y_1(-\log(x)+\log(1-p)+\log(1+\varepsilon_m) )  (\omega) \right).
\end{equation*}
Note that for all $ j \in \lbrace 1,...,k \rbrace$, the function $ f_j^{\varepsilon_m} $ is measurable, since the function $ f_j $ is measurable and $ A_{j}(x, \varepsilon_m) $ is a measurable set.  Recall that if $ i-1 < n(1-p)(1+\varepsilon_m) $ then
\begin{eqnarray}
\label{snuggles}
\overline{X}_i(n,\varepsilon) &=& Y_i(\log(n)-\log(i-1)+\log(1-p)+\log(1+\varepsilon)+b_i) ,
\end{eqnarray}
 and $ \overline{X}_i(n,\varepsilon)=0 $ otherwise. Recall moreover that
\begin{equation}
\label{wurmli}
 (Y_i(t +b_i ), t \geq 0) \overset{d}{=} (Y_1(t), t \geq 0), \quad \textnormal{ for all } \quad i \in \mathbb{N}. 
\end{equation} 
 Combining the Equations (\ref{bounds}), (\ref{snuggles}), and (\ref{wurmli}), we thus have
\begin{align*}
 \sum_{i=\lfloor \delta t_k n \rfloor }^\infty & \mathbb{E} \left[ \bigg\vert \sum_{j=1}^k a_j Y_{i}(T(\lfloor t_j n \rfloor) \bigg\vert^\alpha  1_{ \lbrace  i \leq  \lfloor t_j n \rfloor\rbrace } 1_{E_{n,m} } \right] \\ &\leq \sum_{i=\lfloor \delta t_k n \rfloor} ^{ \lfloor t_k n  \rfloor }\int_{\Omega} \bigg\vert \sum_{j=1}^k a_j f_j^{\varepsilon_m}\left(\frac{i-1}{n},\omega\right) \bigg\vert^\alpha  d\mathbb{P}. 
\end{align*}

Now since the $ f_j $ are measurable functions, we have that
\begin{align*}
\lim_{n \rightarrow \infty} \frac{1}{n}\sum_{i=\lfloor \delta t_k n \rfloor }^{ \lfloor t_k n  \rfloor }  & \int_{\Omega} \bigg\vert \sum_{j=1}^k a_j f^{\varepsilon_m}_j\left(\frac{i-1}{n},\omega\right) \bigg\vert^\alpha d\mathbb{P}(\omega) \\   &= \int_{(\delta t_k,t_k)}  \left( \int_{\Omega} \bigg\vert \sum_{j=1}^k a_j f^{\varepsilon_m}_j\left(x,\omega\right) \bigg\vert^\alpha d\mathbb{P}(\omega) \right) dx \\   &= \int_{(\delta t_k,\infty)}  \left( \int_{\Omega} \bigg\vert \sum_{j=1}^k a_j f^{\varepsilon_m}_j\left(x,\omega\right) \bigg\vert^\alpha d\mathbb{P}(\omega) \right) dx \\  &=   \int_{(\delta t_k,\infty) \times \Omega} \bigg\vert \sum_{j=1}^k a_j f^{\varepsilon_m}_j\left(x,\omega\right) \bigg\vert^\alpha dx \times d\mathbb{P}(\omega) , 
\end{align*}
where the last equality is due to Tonelli's theorem for nonnegativ, measurable functions. Adding the two parts of the sum, and then letting  $ \delta $ tend to zero and $ m $ tend to infinity we derive the upper bound. The lower bound can be shown in the same spirit. We are then left to show
\begin{equation}
\label{Variance}
\lim_{n \rightarrow \infty} \frac{1}{n^2} \cdot \textup{Var}\left(  \sum_{i= \lfloor \delta t_k n \rfloor }^\infty \bigg\vert \sum_{j=1}^k a_j Y_{i}(T(\lfloor t_j n \rfloor) 1_{ \lbrace  i \leq  \lfloor t_j n \rfloor\rbrace } \bigg\vert^\alpha 1_{E_{n,m}}\right)=0.
\end{equation}
We split the variance in three parts. Let
\begin{align*}
V_{n,m}^1 :&=\frac{1}{n^2} \cdot \sum_{i= \lfloor \delta t_k n \rfloor }^\infty  \mathbb{E}\left[\bigg\vert \sum_{j=1}^k a_j Y_{i}(T(\lfloor t_j n \rfloor) 1_{ \lbrace  i \leq  \lfloor t_j n \rfloor\rbrace } \bigg\vert^{2 \alpha} 1_{E_{n,m}} \right], 
\\ V_{n,m}^2 :&= \frac{2}{n^2} \cdot \sum_{i \neq l} \textup{Cov}\bigg( \bigg\vert \sum_{j=1}^k a_jY_{i}(T(\lfloor t_j n \rfloor) \vert 1_{ \lbrace  i \leq  \lfloor t_j n \rfloor\rbrace } \bigg\vert^{ \alpha} 1_{E_{n,m}}, \\ &\textnormal{\textcolor{white}{............................}} \bigg\vert \sum_{j=1}^k a_j Y_{l}(T(\lfloor t_j n \rfloor)  1_{ \lbrace  l \leq  \lfloor t_j n \rfloor\rbrace } \bigg\vert^{ \alpha} 1_{E_{n,m}}\bigg),
\\ V_{n,m}^ 3 :&= \frac{1}{n^2} \cdot \sum_{i= \lfloor \delta t_k n \rfloor }^\infty  \mathbb{E}\left[\bigg\vert \sum_{j=1}^k a_j Y_{i}(T(\lfloor t_j n \rfloor) 1_{ \lbrace  i \leq  \lfloor t_j n \rfloor\rbrace } \bigg\vert^{ \alpha} 1_{E_{n,m}} \right]^2.
\end{align*}
Recall that   $ a:=\vert a_1 \vert+...+\vert a_k \vert  $. We then have
\begin{align*}
\lim_{m \rightarrow \infty} \lim_{n \rightarrow \infty} V_{n,m}^1 &\leq \lim_{m \rightarrow \infty} \lim_{n \rightarrow \infty}  \frac{1}{n^2} \cdot  \sum_{i= \lfloor \delta t_k n \rfloor }^{ \lfloor t_k n \rfloor} \mathbb{E}\left[ \bigg\vert k  \cdot a \cdot  \overline{X}_i(\lfloor t_k n \rfloor, \varepsilon_m) \vert \bigg\vert^{2 \alpha} \right] \\ &\leq  \lim_{m \rightarrow \infty} \lim_{n \rightarrow \infty}  \frac{( k \cdot a)^{2\alpha} }{n^2} \cdot \sum_{i=1}^{ \lfloor t_k n \rfloor}  \mathbb{E}\left[  \overline{X}_i(\lfloor t_k n \rfloor, \varepsilon_m)^{2 \alpha} \right],
\end{align*}
which tends to zero by the same arguments  as in the proof of Lemma \ref{expectations second moment}. Hence $ V_{n,m}^3 $ tends to zero as well as $ n $ and $ m $ tend to infinity. Last, the random variables
 $ \overline{\chi}_i( \lfloor t_j n \rfloor, \varepsilon)  $ for $ i \in \lbrace1,...,\lfloor t_j n \rfloor \rbrace $, defined in Equation \ref{chi}, are independent random variables and $ \underline{\chi}_i( \lfloor t_j n \rfloor, \varepsilon)  $ for $ i \in \lbrace1,...,\lfloor t_j n \rfloor\rbrace $ are independent random variables with  
\[\lim_{m \rightarrow \infty}  \overline{\chi}_i( \lfloor t_j n \rfloor, \varepsilon_m)  = \lim_{m \rightarrow \infty}   \underline{\chi}_i( \lfloor t_j n \rfloor, \varepsilon_m) \quad a.s. \]
By the  inequalities (\ref{bounds}) we conclude that $ \lim_{m \rightarrow \infty} \lim_{n \rightarrow \infty} V_{n,m}^2=0$. This proves Equation (\ref{Variance}).
\end{proof}

\subsection{Representation as stable integral}

In this section, we aim to give a characterization of the Shark Random Swim in the subcritical case in terms stable integrals.  Recall that in Section \ref{erdb}, we introduced a Yule process with mutation $ Y_i(t) $ for $ i \in  \mathbb{N} $, and we denote by $ (\Omega, \mathcal{F}, \mathbb{P}) $ the underlying probability space. We follow the definition of stable integrals introduced in Section \ref{sttabel}. Consider the measure space \[( (0,\infty) \times \Omega ,  \mathcal{B} \times \mathcal{F}, \lambda \times \mathbb{P} ), \] where $ \mathcal{B} $ denotes the Borel $ \sigma $-algebra and $ \lambda $  the Lebesgue measure. As in the previous section, we consider the map  $ f:(0, \infty) \times \Omega \rightarrow \mathbb{R}_+ $ with
\begin{align*}
f(x,\omega):= Y_1(-\log(x)+\log(1-p))(\omega)1_{\lbrace x < (1-p) \rbrace }.
\end{align*}
We then have  for each $ t>0 $ that
\[f\left(\frac{x}{t},\omega\right)\in L^{\alpha}\left((0,\infty) \times \Omega ,  \mathcal{B} \times \mathcal{F}, \lambda \times \mathbb{P} \right), \]
since by Tonelli's theorem
\[\int_{(0,\infty) \times \Omega} \bigg\vert f \left(\frac{x}{t},\omega \right) \bigg\vert^\alpha dx \times d\mathbb{P}(\omega) \leq \int_{(0,1)} \left( \frac{t}{y}\right)^{\alpha p} dy < \infty.  \]
Hence by Lemma \ref{characteristic}, there exists a exists a stochastic process $ S(t) $  whose finite-dimensional distributions $ (S(t_1),...S(t_k)) $, with $ k \in \mathbb{N}  $, have characteristic function
\begin{equation*} 
 \exp\left(  \int_{(0,\infty) \times \Omega} \bigg\vert \sum_{j=1}^k \theta_jf\left(\frac{x}{t_j}, \omega \right)\bigg\vert^\alpha dx \times d\mathbb{P}(\omega) \right).
 \end{equation*} Since we have shown in the previous section that
\begin{align*}
\lim_{n \rightarrow \infty} & \exp\left( i \sum_{j=1}^k  \theta_j \left( \frac{1}{n}\right)^{\frac{1}{\alpha}}  S_{\lfloor t_j n \rfloor }\right) \\ &= \exp\left(  \int_{(0,\infty) \times \Omega} \bigg\vert \sum_{j=1}^k \theta_jf\left(\frac{x}{t_j}, \omega \right)\bigg\vert^\alpha dx \times d\mathbb{P}(\omega) \right),
\end{align*}
we conclude that the finite dimensional distributions of the Shark Random Swim converge to the finite dimensional distributions of the stochastic process 
\[S(t)=\int_{(0,\infty) \times \Omega} f\left(\frac{x}{t}, \omega \right)dM(x,\omega), \quad  t \in \mathbb{R}_+ \]
where $ M $ is an $ \alpha $-stable random measure with control measure $ dx \times d\mathbb{P}( \omega)  $.  The $ \alpha $-stable random measure contains the two layers of randomness appearing in the Shark Random Swim. The first part, $ dx $,  of its control measure is the contribution from the stable random variables and the second part, $ \mathbb{P}( \omega) $, is coming from the cluster sizes.

 We now aim to give a better understanding of the limiting process. In the case $ \alpha = 2 $, the limiting process is Gaussian and can be characterized by its covariance functions given in the next statement. 
\begin{lemma} \label{covariance2}
Let $ \alpha = 2 $ and let $ 0 <r < t $. We then have
\[\textup{Cov}(S(r),S(t))=2 \cdot \left(\frac{t}{r}\right)^{p} \frac{r}{1-2p}.\]
\end{lemma}

\begin{proof}
By Lemma \ref{covariance} and Tonelli's theorem we have  
\begin{align*}
\textup{Cov}(S(r),S(t)) &=2 \int_{(0,\infty) \times \Omega} f\left(\frac{x}{r},\omega \right) f\left(\frac{x}{t},\omega \right) dx \times d\mathbb{P}(\omega)
\\ &= 2 \int_{0}^{(1-p)r} \mathbb{E}\left[ Y_1\left(\log\left( \frac{r(1-p)}{x}\right)\right) Y_1\left(\log\left( \frac{t(1-p)}{x}\right)\right)\right] dx.
\end{align*}
Now, recall that $ (Y_1(t))_{t \geq 0} $ is a Yule process with birth rate $ p $, and let by $ \mathbb{P}_k $ denote the law of $ Y_1(t) $ when $ Y_1(0)=k $. For $ s < t $ we then have by the Markov property and the branching  property
\begin{align*}
\mathbb{E}_1[Y_1(s)Y_1(t)]&=\mathbb{E}_1\left[\mathbb{E}_1[Y_1(s)Y_1(t)\vert \mathcal{F}_s]\right]
\\ &=  \mathbb{E}_1 \left[ Y_1(s) \mathbb{E}_{Y_1(s)}\left[Y_1(t-s)\right]\right] \\ &=   \mathbb{E}_1[Y_1(t-s)] \cdot \mathbb{E}_1\left[Y_1(s)^2\right] \\ &= e^{-p(t-s)} \cdot \left( \frac{2-e^{-ps}}{e^{-2ps}} \right),
\end{align*}
where we used in the last inequality that for each $ t >0 $, the random variable $ Y_1(t) $ has the Geometric distribution with parameter $ e^{-tp} $. Hence
\begin{align*}
\textup{Cov}(S(r),S(t)) &=2 \left( \frac{t}{r}\right)^p \int_{0}^{(1-p)r} \left( 2 \left( \frac{r(1-p)}{x} \right)^{2p} -   \left(\frac{r(1-p)}{x} \right)^{p}\right)dx \\&= 2 \left(\frac{t}{r}\right)^{p} \frac{r}{1-2p}.
\end{align*}
\end{proof}

\begin{remark}
If we set $ p=2q-1 $, with $ q $ being the memory parameter of the first formulation of the ERW, we recover twice the covariance functions of the limiting process of the ERW in the subcritical case, see Baur and Bertoin \cite{Baur}.  Recall that if $ \alpha = 2 $, the steps of the Shark Random Swim  are $ \mathcal{N}(0,2) $ distributed, which explains the factor $ 2 $.
\end{remark}
\subsection{Critical case}
In this section we aim to show the following result.
\begin{theorem}
Let $ k \in \mathbb{N} $ and $0 < t_1< t_2<...<t_k \in \mathbb{R} $.  We have the distributional convergence
\[\left(\frac{1}{n^{t_1} \log(n)} S_{\lfloor n^{t_1} \rfloor},...,\frac{1}{n^{t_k} \log(n)} S_{\lfloor n^{t_k} \rfloor} \right) \rightarrow (\tilde{S}(t_1),...,\tilde{S}(t_k) ,\]
where the random vector $ \left(\tilde{S}(t_1),...,\tilde{S}(t_k) \right) $ has joint characteristic function
\begin{align*}
\exp\left(i  \sum_{m=1}^k a_m \tilde{S}(t_m) \right) =
 \exp\left( -(1-p)\Gamma(\alpha + 1) \sum_{m=1}^k (t_m - t_{m-1}) \bigg\vert \sum_{j=m}^k a_j \bigg\vert^\alpha \right),
 \end{align*}
with $ a_1,...,a_k \in \mathbb{R} $, and we recognize  the characteristic function of the finite dimensional distributions of an $ \alpha $-stable Lévy process.
\end{theorem}
Recall that $ \mathcal{F}_n: = \sigma(\vert c_{1,n} \vert,...,\vert c_{n,n} \vert) $. We have
\begin{align*}
\mathbb{E}&\left[\exp\left( i \sum_{j=1}^k a_j \left( \frac{1}{n^{t_j}\log(n)}\right)^{\frac{1}{\alpha}}  S_{\lfloor n^{t_j}  \rfloor } \right) \bigg\vert \mathcal{F}_{\lfloor n^{t_k}  \rfloor} \right] \\&= \exp\left(-\frac{1}{\log(n)}\sum_{m=1}^\infty \bigg\vert \sum_{j=1}^k a_j \cdot \frac{1}{n^{t_j}} \vert c_{m,\lfloor n^{t_j}  \rfloor } \vert 1_{\lbrace m \leq \lfloor n^{t_j}  \rfloor  \rbrace } \bigg\vert^\alpha \right). 
\end{align*}
As in the previous section it will be enough to show the convergence in probability
\begin{align*}  \lim_{m \rightarrow \infty} \lim_{n \rightarrow \infty} \sum_{i=1}^\infty & \bigg\vert \sum_{j=1}^k a_j \cdot Y_{i }(T(\lfloor n^{t_j}  \rfloor) \vert 1_{\lbrace i \leq \lfloor t_j n \rfloor  \rbrace } \bigg\vert^\alpha 1_{G_{n,m}} \\ &= \exp\left( -(1-p)\Gamma(\alpha + 1) \sum_{l=1}^k (t_l - t_{l-1}) \bigg\vert \sum_{j=l}^k a_j \bigg\vert^\alpha \right),
\end{align*}
where the sequence of events $ G_{n,m} := E_m(\lfloor n^{\delta t_k} \rfloor, \lfloor n^{ t_k} \rfloor)$ with  $ 0< \delta < \min\left(1,\frac{1}{t_k} \right) $ is defined as in Lemma \ref{sequuence} and thus \[\lim_{m \rightarrow \infty} \lim_{n \rightarrow \infty}\mathbb{P}(G_{n,m})=1 .\] 

For the proof we further need to construct a second sequence of events. Recall that  $ c_{i,n}^\prime $ denote the subtree rooted at node $ i \in \lbrace 1,...,n \rbrace$ after percolation and that
 \begin{equation*}
  \vert c_{i,n}^{\prime} \vert \overset{d}{=} \vert c_{1,Y(n,i)} \vert,
  \end{equation*}
  where  $Y(n,i) $ is Beta-binomial distributed with parameter $ (n-i,1,i-1) $ and independent of the cluster sizes. Moreover \[\lim_{n \rightarrow \infty} \frac{1}{n} Y(n,i) =  Y(i) \quad a.s.\] and $ Y(i) $ is Beta distributed with parameters $ 1 $ and $ i-1 $. We then have:
\begin{lemma}
\label{sett}
Let $0< \delta < 1$ and $ \mu >0 $. Let $ i \in \lbrace 1,...,n \rbrace $ and define the events  
\[B_i(\mu):= \left\lbrace \bigg\vert \frac{1}{n} Y(n,i)- Y(i) \bigg\vert<\mu \right\rbrace \quad \textnormal{and} \quad F_n(\delta, \mu) :=\bigcap_{i=\lfloor n^{\delta} \rfloor}^n B_i(\mu). \] We then have $ \lim_{n \rightarrow}\mathbb{P}(F_n(\delta, \mu))=1 $.
\end{lemma}
\begin{proof} By Markov's inequality we have that
\begin{align*}
  \mathbb{P}(F_n(\delta, \mu)) &= 1-\sum_{i=\lfloor n^\delta \rfloor}^n \mathbb{P}(B_i(\mu)^c) \\ &\geq 1-\frac{1}{\varepsilon^2} \sum_{i=\lfloor n^\delta \rfloor}^n \mathbb{E}\left[ \left( \frac{1}{n} Y(n,i)-  Y(i) \right)^2 \right].
  \end{align*}
  Moreover by Lemma \ref{freedman} we have
  \begin{align*}
\sum_{i = \lfloor n^\delta \rfloor}^n  & \mathbb{E}\left[\left( \frac{1}{n} Y(n,i)-  Y(i)\right)^2\right] \\ &= \sum_{i = \lfloor n^\delta \rfloor}^n  \mathbb{E}\left[ \mathbb{E}\left[\left(  \frac{1}{n} Y(n,i)- Y(i)\right)^2 \bigg\vert Y(i)\right]\right] \\ &= \sum_{i = \lfloor n^\delta \rfloor}^n   \frac{1}{n}   \int_0^1 (x-x^ 2)\cdot (1-x)^ {i-2} \frac{1}{\beta(1,i-1)} dx \\ &= \sum_{i = \lfloor n^\delta \rfloor}^n  \frac{1}{n}\cdot \frac{i-1}{i+i^2}, 
  \end{align*}
 which tends to zero as $ n $ tends to infinity, and we have  shown Lemma \ref{sett}.
\end{proof}
Now let us define for $ 0 < \varepsilon < 1 $ the event
\begin{equation}
\label{Kistli}
H(n, \varepsilon):=\left\lbrace \left\vert \frac{1}{n} \vert c_{1,n} \vert - X_1 \right\vert \leq \varepsilon \right\rbrace, 
\end{equation}
 let $ (\mu_m)_{m \in \mathbb{N}} $ be a  sequence with $ 0 < \mu_n < 1 $ and $ \lim_{m \rightarrow \infty} \mu_m =0 $, and let $ 0<\delta<\min(\frac{1}{t_k},1) $. We then define the event
\begin{equation} \label{eve} E_{n,m} := H\left(\lfloor n^{t_1}  \cdot \mu_m \cdot (1-\varepsilon) \rfloor, \varepsilon\right) \cap F_{\lfloor n^{t_k} \rfloor}(\delta, \varepsilon), 
\end{equation}
and note that
\[\lim_{m \rightarrow \infty} \lim_{n \rightarrow \infty} \mathbb{P}(E_{n,m})=1.\]
We are now ready to tackle the proof of Theorem \ref{theorem 2} which is, by dominated convergence, a direct of consequence of the following result.
\begin{proposition}
Let $ p < 1 $ and $ \alpha p = 1 $. We then have the convergence in probability
\begin{align*}
\lim_{m \rightarrow \infty} \lim_{n \rightarrow \infty} & \frac{1}{\log(n)} \sum_{i=1}^\infty \bigg\vert \sum_{j=1}^k \left( \frac{1}{ n^{t_j} } \right)^{\frac{1}{\alpha}} a_j  Y_{i}(T(\lfloor n^{t_j } \rfloor )) 1_{\lbrace i \leq \lfloor n^{t_j } \rfloor \rbrace}  \bigg\vert^\alpha 1_{G_{n,m}} \\ &= (1-p)\Gamma(\alpha + 1) \sum_{l=1}^k (t_l - t_{l-1}) \bigg\vert \sum_{j=l}^k a_j \bigg\vert^\alpha .
\end{align*}
\end{proposition}
\begin{proof}
We shall use the second moment method. First we show that
\begin{align}
\label{exp}
\lim_{m \rightarrow \infty} \lim_{n \rightarrow \infty}  \frac{1}{\log(n)} \sum_{i=1}^\infty & \mathbb{E} \left[ \bigg\vert \sum_{j=1}^k  \left( \frac{1}{ n^{t_j} } \right)^{\frac{1}{\alpha}} a_j Y_i(T( \lfloor n^{t_j } \rfloor))  1_{\lbrace i \leq \lfloor n^{t_j } \rfloor \rbrace}  \bigg\vert^\alpha 1_{G_{n,m}}\right] \nonumber \\ &= (1-p)\Gamma(\alpha + 1) \sum_{l=1}^k (t_l - t_{l-1}) \bigg\vert \sum_{j=l}^k a_j \bigg\vert^\alpha .
\end{align}
If $ \alpha = 2 $, the result is a simple computation using the bound developed in Equation (\ref{bounds}), and the first and second moments of Geometric distributed random variables. We let thus $ \alpha <2 $ and aim to show that
\begin{align}
\label{exp2}
\lim_{m \rightarrow \infty} \lim_{n \rightarrow \infty}  \frac{1}{\log(n)} \sum_{i=1}^\infty & \mathbb{E} \left[ \bigg\vert \sum_{j=1}^k  \left( \frac{1}{ n^{t_j} } \right)^{\frac{1}{\alpha}} a_j \vert c_{i, \lfloor n^{t_j } \rfloor}  \vert 1_{\lbrace i \leq \lfloor n^{t_j } \rfloor \rbrace}  \bigg\vert^\alpha 1_{
E_{n,m}}\right] \nonumber \\ &= (1-p)\Gamma(\alpha + 1) \sum_{l=1}^k (t_l - t_{l-1}) \bigg\vert \sum_{j=l}^k a_j \bigg\vert^\alpha ,
\end{align}
with $ E_{n,m} $ defined as in Equation (\ref{eve}). We first justify that this is equivalent to Equation (\ref{exp}). Indeed,

\begin{align*}
\mathbb{E} & \left[ \bigg\vert \sum_{j=1}^k  \left( \frac{1}{ n^{t_j} } \right)^{\frac{1}{\alpha}} a_j  \vert c_{i, \lfloor n^{t_j } \rfloor}  \vert 1_{\lbrace i \leq \lfloor n^{t_j } \rfloor \rbrace}  \bigg\vert^\alpha 1_{\lbrace \Omega \backslash E_{n,m} \rbrace}\right] \\ &\leq \sum_{j=1}^k \frac{1}{ n^{t_j} } \vert a_j  \vert^\alpha \mathbb{E} \left[ \vert    c_{i, \lfloor n^{t_j } \rfloor}   \vert^2 \right]^{\frac{\alpha}{2}} \mathbb{P}(\Omega \backslash E_{n,m})^{\frac{\alpha}{2}} \\ &\leq \sum_{j=1}^k c(2,p) \cdot \vert a_j  \vert^\alpha \frac{1}{i} \cdot \mathbb{P}(\Omega \backslash E_{n,m})^{\frac{\alpha}{2}},
\end{align*}
where the constant $ c(2,p) $ is as in Lemma \ref{boundibound}, and we conclude that 
\begin{align*}
\lim_{m \rightarrow \infty} \lim_{n \rightarrow \infty}  \frac{1}{\log(n)} \sum_{i=1}^\infty & \mathbb{E} \left[ \bigg\vert \sum_{j=1}^k  \left( \frac{1}{ n^{t_j} } \right)^{\frac{1}{\alpha}} a_j \vert c_{i, \lfloor n^{t_j } \rfloor}  \vert 1_{\lbrace i \leq \lfloor n^{t_j } \rfloor \rbrace}  \bigg\vert^\alpha 1_{\lbrace \Omega \backslash E_{n,m} \rbrace}\right]=0.
\end{align*}
In the same spirit one can show that
\begin{align*}
\lim_{m \rightarrow \infty} \lim_{n \rightarrow \infty}  \frac{1}{\log(n)} \sum_{i=1}^\infty & \mathbb{E} \left[ \bigg\vert \sum_{j=1}^k  \left( \frac{1}{ n^{t_j} } \right)^{\frac{1}{\alpha}} a_j Y_{i} (T( \lfloor n^{t_j } \rfloor))  1_{\lbrace i \leq \lfloor n^{t_j } \rfloor \rbrace}  \bigg\vert^\alpha 1_{\lbrace \Omega \backslash G_{n,m} \rbrace}\right]=0,
\end{align*}
which proves the equivalence of Equations (\ref{exp}) and (\ref{exp2}). As in the previous section we now split the sum in two parts. We start by establishing a bound for the first $ \lfloor n^ {\delta t_k }  \rfloor $ terms.
\begin{align*}
 \sum_{i=1}^{\lfloor n^{\delta t_k}  \rfloor} & \mathbb{E} \left[ \bigg\vert \sum_{j=1}^k  \left( \frac{1}{ n^{t_j} } \right)^{\frac{1}{\alpha}} a_j  \vert c_{i, \lfloor n^{t_j } \rfloor} \vert 1_{\lbrace i \leq \lfloor n^{t_j } \rfloor \rbrace}  \bigg\vert^\alpha 1_{E_{n,m}}\right]  \\ &\leq  \sum_{i=1}^{\lfloor n^{\delta t_k}  \rfloor} \sum_{j=1}^k \frac{1}{ n^{t_j} } \vert a_j \vert^\alpha \mathbb{E} \left[ \vert c_{i, \lfloor n^{t_j } \rfloor}  \vert^\alpha 1_{E_{n,m}}   \right] 1_{\lbrace i \leq \lfloor n^{t_j } \rfloor \rbrace} \\ &\leq c_1(\alpha,p) \sum_{i=1}^{\lfloor n^{\delta t_k}  \rfloor} \sum_{j=1}^k \vert a_j \vert^\alpha \cdot   \frac{1}{i}.
\end{align*}
where the  constant $ c_1(\alpha,p) $ is as in Lemma  \ref{boundibound}, hence:
\begin{align}
\label{v}
\lim_{n \rightarrow \infty}  \frac{1}{\log(n)}  \sum_{i=1}^{\lfloor n^{\delta t_k}  \rfloor} & \mathbb{E} \left[ \bigg\vert \sum_{j=1}^k  \left( \frac{1}{ n^{t_j} } \right)^{\frac{1}{\alpha}} a_j  \vert c_{i, \lfloor n^{t_j } \rfloor} \vert 1_{\lbrace i \leq \lfloor n^{t_j } \rfloor \rbrace}  \bigg\vert^\alpha 1_{E_{n,m}}\right]  \nonumber \\ &\leq c_1(\alpha,p) \sum_{j=1}^k \vert a_j \vert^\alpha \cdot  \delta t_k.
\end{align}
For the other part of the sum, note that
\begin{align*}
\sum_{i= \lfloor n^{\delta t_k } \rfloor }^\infty & \mathbb{E} \left[ \bigg\vert \sum_{j=1}^k \left( \frac{1}{ n^{t_j} } \right)^{\frac{1}{\alpha}} a_j  \vert c_{i, \lfloor n^{t_j } \rfloor} \vert 1_{\lbrace i \leq \lfloor n^{t_j } \rfloor \rbrace}  \bigg\vert^\alpha 1_{E_{n,m}} \right] \\ &= (1-p) \sum_{i= \lfloor n^{\delta t_k } \rfloor }^\infty \mathbb{E} \left[ \bigg\vert \sum_{j=1}^k  \left(\frac{1}{ n^{t_j} } \right)^{\frac{1}{\alpha}}a_j  \vert c^\prime_{i, \lfloor n^{t_j } \rfloor} \vert 1_{\lbrace i \leq \lfloor n^{t_j } \rfloor \rbrace}  \bigg\vert^\alpha 1_{E_{n,m}} \right]  \\&= (1-p) \sum_{i= \lfloor n^{\delta t_k } \rfloor  }^\infty \mathbb{E}  \left[ \bigg\vert \sum_{j=1}^k \left( \frac{1}{n^{t_j}} \right)^{\frac{1}{\alpha}} a_j \vert c_{1,Y(\lfloor n^{t_j } \rfloor,i)} \vert 1_{\lbrace i \leq \lfloor n^{t_j } \rfloor \rbrace}  \bigg\vert^\alpha 1_{E_{n,m}} \right]. 
\end{align*}
Now, define the event
\[A_{ij}:=\left\lbrace \omega \in \Omega: \sgn(a_j)=\sgn\left(\sum_{j=1}^k  a_j \vert c_{1,Y(\lfloor n^{t_j } \rfloor,i)} \vert (\omega) 1_{\lbrace i \leq \lfloor n^{t_j } \rfloor \rbrace}  \right) \right\rbrace,\]
and, for $ \varepsilon > 0 $, the map $ f^{\varepsilon}:\Omega \rightarrow \mathbb{R} $ with
\[f_{ij}^\varepsilon(\omega)=(1+\varepsilon)1_{A_{ij}}(\omega)+(1-\varepsilon)1_{A_{ij}^c}(\omega).\]
By the same reasoning as in the previous section, and the fact that on $ E_{n,m} $ we have the almost sure inequality
\[\lfloor n^{t_j } \rfloor Y(i) (1-\varepsilon) \leq  Y(\lfloor n^{t_j } \rfloor,i) \leq \lfloor n^{t_j } \rfloor Y(i) (1+\varepsilon),\]
for all $ i \in \lbrace \lfloor n^{\delta t_k}  \rfloor,..., \lfloor n^{t_k}  \rfloor\rbrace $, we then have the inequality
\begin{align}
\label{sdfg}
\mathbb{E}  & \left[ \bigg\vert \sum_{j=1}^k \left( \frac{1}{n^{t_j}} \right)^{\frac{1}{\alpha}} a_j \vert c_{1,Y(\lfloor n^{t_j } \rfloor,i)} \vert 1_{\lbrace i \leq \lfloor n^{t_j } \rfloor \rbrace}  \bigg\vert^\alpha 1_{E_{n,m}} \right] \nonumber \\ &\leq \mathbb{E}  \left[ \bigg\vert \sum_{j=1}^k \left( \frac{1}{n^{t_j}} \right)^{\frac{1}{\alpha}} a_j \vert c_{1,\left\lfloor\lfloor n^{t_j } \rfloor Y(i) f_{ij}^\varepsilon\right\rfloor} \vert 1_{\lbrace i \leq \lfloor n^{t_j } \rfloor \rbrace}  \bigg\vert^\alpha 1_{E_{n,m}} \right].
\end{align}
Recall that the random variable $ Y(i) $ is Beta distributed with parameters $ 1 $ and $ i-1 $ and thus, by conditioning on $ Y(i) $, Line (\ref{sdfg}) is equal to
\begin{align}
\label{t3}
 \int_0^1 \mathbb{E}  \left[ \bigg\vert \sum_{j=1}^k \left( \frac{1}{n^{t_j}} \right)^{\frac{1}{\alpha}} a_j \vert c_{1,\left\lfloor \lfloor n^{t_j } \rfloor x f_{ij}^\varepsilon \right\rfloor} \vert 1_{\lbrace i \leq \lfloor n^{t_j } \rfloor \rbrace}  \bigg\vert^\alpha 1_{E_{n,m}} \right] (i-1) \cdot (1-x)^{i-2}  dx.
\end{align}
We now split the integral into two parts. For the first part, note that for $ x \in (\mu_m,1) $ we have by the definition of the event $ E_{n,m} $, thanks Equations (\ref{Kistli}) and (\ref{eve}), that
\begin{align*}
\mathbb{E} &  \left[ \bigg\vert \sum_{j=1}^k \left( \frac{1}{n^{t_j}} \right)^{\frac{1}{\alpha}} a_j \vert c_{1,\left\lfloor \lfloor n^{t_j } \rfloor x f_{ij}^\varepsilon \right\rfloor} \vert 1_{\lbrace i \leq \lfloor n^{t_j } \rfloor \rbrace}  \bigg\vert^\alpha 1_{E_{n,m}} \right] \\ &\leq \mathbb{E}   \left[ \bigg\vert \sum_{j=1}^k \left(\frac{1}{n^{t_j}} \right)^{\frac{1}{\alpha}} a_j X_1 \cdot \left(\lfloor  n^{t_j} \rfloor x f_{ij}^\varepsilon \right)^{\frac{1}{\alpha}} \cdot  f_{ij}^\varepsilon   \bigg\vert^\alpha   1_{ \lbrace i \leq \lfloor n^{t_j} \rfloor \rbrace } 1_{E_{n,n}}  \right].
\end{align*}
We combine this inequality with Equation (\ref{t3}). We  have
\begin{align} 
\label{t1}
  \int_{\mu_m}^1  \mathbb{E}  & \left[ \bigg\vert \sum_{j=1}^k \left(\frac{1}{n^{t_j}} \right)^{\frac{1}{\alpha}} a_j X_1 \cdot \left(\lfloor  n^{t_j} \rfloor x f_{ij}^\varepsilon \right)^{\frac{1}{\alpha}} \cdot  f_{ij}^\varepsilon   \bigg\vert^\alpha   1_{ \lbrace i \leq \lfloor n^{t_j} \rfloor \rbrace } 1_{E_{n,n}}  \right] (i-1)  (1-x)^{i-2}dx \nonumber \\ &\leq \mathbb{E}[\vert X_1 \vert^\alpha] \cdot  \mathbb{E}\left[\bigg\vert \sum_{j=1}^k  (f_{ij}^{\varepsilon})^{1+\frac{1}{\alpha}} \cdot a_j    \cdot  1_{ \lbrace i \leq \lfloor n^{t_j} \rfloor \rbrace } \bigg\vert^\alpha \right] \int_{\mu_m}^1 x(i-1)  (1-x)^{i-2} dx  \nonumber \\ &\leq \mathbb{E}[\vert X_1 \vert^\alpha ] \cdot  \mathbb{E}\left[\bigg\vert \sum_{j=1}^k  (f_{ij}^{\varepsilon})^{1+\frac{1}{\alpha}} \cdot a_j  \cdot  1_{ \lbrace i \leq \lfloor n^{t_j} \rfloor \rbrace } \bigg\vert^\alpha  \right] \int_{0}^1 x (i-1)  (1-x)^{i-2} dx \nonumber \\ &=  \Gamma(\alpha +1) \cdot  \mathbb{E}\left[\bigg\vert \sum_{j=1}^k  (f_{ij}^{\varepsilon})^{1+\frac{1}{\alpha}} \cdot a_j   \cdot  1_{ \lbrace i \leq \lfloor n^{t_j} \rfloor \rbrace } \bigg\vert^\alpha   \right] \frac{1}{i} .
\end{align}
For the other part of the integral, we have
\begin{align*}
\int_0^{\mu_m} & \sum_{j=1}^k  \frac{1}{n^{t_j}} \cdot \vert a_j \vert^\alpha \mathbb{E}\left[\left\vert c_{1,\left\lfloor \lfloor n^{t_j } \rfloor x f_{ij}^\varepsilon \right\rfloor} \right\vert^\alpha 1_{E_{n,m} } \right] (i-1)  (1-x)^{i-2} dx \\ &\leq \int_0^{\mu_m} \sum_{j=1}^k  \frac{1}{n^{t_j}} \cdot \vert a_j \vert^\alpha \mathbb{E}\left[\left\vert c_{1,\left\lfloor \lfloor n^{t_j } \rfloor x f_{ij}^\varepsilon \right\rfloor} \right\vert^2 \right]^{\frac{\alpha}{ 2}} (i-1) (1-x)^{i-2} dx \\ &\leq \int_0^{\mu_m} \sum_{j=1}^k    \vert a_j \vert^\alpha \left( \left(\frac{1}{n^{t_j}}\right)^{2p} \mathbb{E}\left[\left\vert c_{1,\left\lfloor \lfloor n^{t_j } \rfloor x f_{ij}^\varepsilon \right\rfloor} \right\vert^2 \right]\right)^{\frac{\alpha}{ 2}} (i-1)(1-x)^{i-2} dx .
\end{align*}
Now let $ c(p):=\sup_{n>0}  \frac{1}{n^{2 p} } \cdot \frac{1}{p} \cdot \frac{\Gamma(2p+n)}{\Gamma(2p)\Gamma(n)} < \infty $. By Lemma \ref{Mittag leffler2}, we then have for all $ x \in (0,1) $ that
\[\left(\left(\frac{1}{n^{t_j}}\right)^{2p} \mathbb{E}\left[\left\vert c_{1,\left\lfloor \lfloor n^{t_j } \rfloor x f_{ij}^\varepsilon \right\rfloor} \right\vert^2 \right]\right)^{\frac{\alpha}{ 2}} \leq x \cdot (1+\varepsilon) \cdot c(p). \]
Hence 
 \begin{align*}
\int_0^{\mu_m} \mathbb{E}  & \left[ \bigg\vert \sum_{j=1}^k \left( \frac{1}{n^{t_j}} \right)^{\frac{1}{\alpha}} a_j \vert c_{1,\left\lfloor \lfloor n^{t_j } \rfloor x f_{ij}^\varepsilon \right\rfloor} \vert 1_{\lbrace i \leq \lfloor n^{t_j } \rfloor \rbrace}  \bigg\vert^\alpha 1_{E_{n,m}} \right]  (i-1) (1-x)^{i-2}  dx \\ &\leq \int_0^{\mu_m} \sum_{j=1}^k    \vert a_j \vert^\alpha \cdot c(p) \cdot (1+\varepsilon)  \cdot x (i-1)  (1-x)^{i-2} dx.
\end{align*}
Now note that for all $ i > 4 $ we have on  $ (0, 1) $ that $ x \cdot (i-1) \cdot (1-x)^{i-2}  \leq  \frac{2}{i-1} $. We hence arrive at
 \begin{align}
 \label{t6}
\int_0^{\mu_m} \mathbb{E}  & \left[ \bigg\vert \sum_{j=1}^k \left( \frac{1}{n^{t_j}} \right)^{\frac{1}{\alpha}} a_j \vert c_{1,\left\lfloor \lfloor n^{t_j } \rfloor x f_{ij}^\varepsilon \right\rfloor} \vert 1_{\lbrace i \leq \lfloor n^{t_j } \rfloor \rbrace}  \bigg\vert^\alpha 1_{E_{n,m} } \right] ( i-1) \cdot (1-x)^{i-2}  dx \nonumber \\ &\leq \sum_{j=1}^k    \vert a_j \vert^\alpha c(p) \cdot (1+\varepsilon) \cdot \frac{2}{i-1} \cdot \mu_m.
\end{align}
Combining the Inequalities  (\ref{t6}) and (\ref{t1}) we arrive at
\begin{align*}
\mathbb{E}  & \left[ \bigg\vert \sum_{j=1}^k \left( \frac{1}{n^{t_j}} \right)^{\frac{1}{\alpha}} a_j \vert c_{1,Y(\lfloor n^{t_j } \rfloor,i)} \vert 1_{\lbrace i \leq \lfloor n^{t_j } \rfloor \rbrace}  \bigg\vert^\alpha 1_{E_{n,m}} \right] \\ &\leq (1-p)\Gamma(\alpha +1) \mathbb{E}\left[\bigg\vert \sum_{j=1}^k  (f_{ij}^{\varepsilon})^{1+\frac{1}{\alpha}} \cdot a_j   \bigg\vert^\alpha \right] \frac{1}{i} \cdot 1_{ \lbrace i \leq \lfloor n^{t_j} \rfloor\rbrace}\\  &\textnormal{\textcolor{white}{.........}}+(1-p)\sum_{j=1}^k    \vert a_j \vert^\alpha c(p) \cdot (1+\varepsilon) \cdot \frac{2}{i-1} \cdot \mu_m 
\end{align*}
Taking limits we deduce that 
\begin{align*}
\lim_{m \rightarrow \infty} & \lim_{n \rightarrow \infty}  \frac{1}{\log(n)} \sum_{i=\lfloor n^{\delta t_k } \rfloor}^\infty \bigg\vert \sum_{j=1}^k \left( \frac{1}{ n^{t_j} } \right)^{\frac{1}{\alpha}} a_j  \vert c_{i, \lfloor n^{t_j } \rfloor} \vert 1_{\lbrace i \leq \lfloor n^{t_j } \rfloor \rbrace}  \bigg\vert^\alpha 1_{E_{n,m}} \nonumber \\  &=  (1-p)\Gamma(\alpha+1)\sum_{l=1}^k (t_l - t_{l-1}) \mathbb{E}\left[\bigg\vert \sum_{j=l}^k  (f_{ij}^{\varepsilon})^{1+\frac{1}{\alpha}} \cdot a_j   \cdot   \bigg\vert^\alpha \right](1-\delta t_k),
\end{align*}
and  by letting $ \varepsilon $ tend to zero we arrive at
\begin{align}
\label{u}
\lim_{m \rightarrow \infty} & \lim_{n \rightarrow \infty}  \frac{1}{\log(n)} \sum_{i=\lfloor n^{\delta t_k } \rfloor}^\infty \bigg\vert \sum_{j=1}^k \left( \frac{1}{ n^{t_j} } \right)^{\frac{1}{\alpha}} a_j  \vert c_{i, \lfloor n^{t_j } \rfloor} \vert 1_{\lbrace i \leq \lfloor n^{t_j } \rfloor \rbrace}  \bigg\vert^\alpha 1_{E_{n,m}} \nonumber \\  &=  (1-p)\Gamma(\alpha+1)\sum_{l=1}^k (t_l - t_{l-1}) \sum_{j=l}^k   \vert a_j   \vert^\alpha(1-\delta t_k), 
\end{align}
and we conclude by combining Inequalities (\ref{v}) and (\ref{u}) and letting $ \delta $ tend to zero. The lower bound can be shown in the same spirit.
We are left to show that 
\begin{align*}
  \textnormal{Var} \left(\frac{1}{\log(n)} \sum_{i=\lfloor n^{\delta t_k} \rfloor}^\infty \bigg\vert \sum_{j=1}^k \left( \frac{1}{ n^{t_j} } \right)^{\frac{1}{\alpha}} a_j Y_i(T(\lfloor n^{t_j } \rfloor)) \vert 1_{\lbrace i \leq \lfloor n^{t_j } \rfloor \rbrace}  \bigg\vert^\alpha 1_{G_{n,m}} \right)
\end{align*}
tends to zero, as $ n $ and $ m $ tend to infinity.
Again, we split the variance in three parts. Define
\begin{align*}
V_{n,m}^1 :&=  \sum_{i= \lfloor n^{\delta t_k} \rfloor }^\infty  \mathbb{E}\left[\bigg\vert \sum_{j=1}^k \left(\frac{1}{n^ {t_j}}\right)^{\frac{1}{\alpha}}a_j Y_{i}(T(\lfloor  n^{t_j} \rfloor)) 1_{ \lbrace  i \leq  \lfloor  n^{t_j} \rfloor\rbrace } \bigg\vert^{2 \alpha} 1_{G_{n,m}}\right], 
\\ V_{n,m}^2 :&= \frac{2}{n^2} \cdot \sum_{i \neq l} \textup{Cov}\bigg(\bigg\vert \sum_{j=1}^k \left(\frac{1}{n^ {t_j}}\right)^{\frac{1}{\alpha}}a_j  Y_{l}(T(\lfloor n^{t_j}\rfloor)) 1_{ \lbrace  l \leq  \lfloor  n^{t_j} \rfloor\rbrace } \bigg\vert^{\alpha} 1_{G_{n,m}}, \\ &\textnormal{\textcolor{white}{...........................}} \bigg\vert \sum_{j=1}^k \left(\frac{1}{n^ {t_j}}\right)^{\frac{1}{\alpha}}a_j  Y_{i} (T(\lfloor n^{t_j} \rfloor)) 1_{ \lbrace  i \leq  \lfloor  n^{t_j} \rfloor\rbrace } \bigg\vert^{ \alpha} 1_{G_{n,m}}\bigg), \\
\\ V_{n,m}^ 3 :&=   \sum_{i=  \lfloor n^{\delta t_k} \rfloor }^\infty  \mathbb{E}\left[\bigg\vert \sum_{j=1}^k \left(\frac{1}{n^ {t_j}}\right)^{\frac{1}{\alpha}}a_j Y_{i}( T(\lfloor n^{t_j} \rfloor)) 1_{ \lbrace  i \leq  \lfloor  n^{t_j} \rfloor\rbrace } \bigg\vert^{ \alpha} 1_{G_{n,m}}\right]^2.
\end{align*}
By the same computations as in the proof of Lemma \ref{critical one dimensional}, we have
\begin{eqnarray*}\lim_{m \rightarrow \infty} \lim_{n \rightarrow \infty} V_{n,m}^1 \leq \lim_{m \rightarrow \infty} \lim_{n \rightarrow \infty} c_m \cdot \frac{ (t_k \cdot k \cdot a)^{2\alpha}   }{\log(n)^2} \sum_{i=1}^{\lfloor t_k n \rfloor}\left( \frac{1}{i} \right)^{2 }=0, \end{eqnarray*}
where $ c_m:=150^{\frac{\alpha}{2}}((1-p)(1+ \varepsilon_m))^{2 } $ and hence $ \lim_{m \rightarrow \infty} \lim_{n \rightarrow \infty} V_{n,m}^3  $. Last $ V_{n,m}^2 $ tends to zero as $ n $ and $ m  $ tend to infinity by the same reasoning as in  the proof of Proposition \ref{Prop1}.
\end{proof}

\section*{Acknowledgement}
I would like to thank Jean Bertoin for introducing me to this topic and for his advice and support. I would also like to thank two anonymous referees for their careful reading of an earlier version of this work and their helpful comments.
% For one-column wide figures use
\bibliographystyle{abbrv} 
\bibliography{literature}

\end{document}